
\documentclass[12pt]{amsart}%
\usepackage{amsmath}
\usepackage{amsfonts}
\usepackage{amssymb}
\usepackage{graphicx,delarray}
\usepackage{graphicx}%
\providecommand{\U}[1]{\protect\rule{.1in}{.1in}}
\theoremstyle{plain}
\newtheorem{theorem}{Theorem}
\newtheorem*{theorem*}{Theorem}

\newtheorem{lemma}{Lemma}
\newtheorem{corollary}{Corollary}

\newtheorem{remark}{Remark}
\numberwithin{equation}{section}
\allowdisplaybreaks
\setlength{\textwidth}{6.5in}
\setlength{\oddsidemargin}{.1in}
\setlength{\evensidemargin}{.1in}
\setlength{\topmargin}{-.5in}
\setlength{\textheight}{8.9in}
\begin{document}



\title{On the higher derivatives of the inverse tangent function}

\author{Mohamed Amine Boutiche}
\email{mboutiche@usthb.dz}
\author{Mourad Rahmani}
\email{mrahmani@usthb.dz}
\address{Faculty of Mathematics, University of Sciences and Technology Houari Boumediene, BP 32, El Alia, Bab Ezzouar 16111, Algiers, Algeria}



\begin{abstract}
In this paper, we find explicit formulas for higher order
derivatives of the inverse tangent function. More precisely, we
study polynomials which are induced from the higher-order
derivatives of $\arctan(x)$. Successively, we give generating
functions, recurrence relations and some particular properties for
these polynomials. Connections to Chebyshev, Fibonacci, Lucas and
Matching polynomials are established.\\
\emph{Mathematics Subject Classification 2010:} 11B83, 33B10, 05C38, 26A24. \\ \emph{Keywords:\ } Explicit formula, Derivative polynomial, Inverse tangent function, Chebyshev polynomial, Matching polynomial. 
\end{abstract}
\maketitle

\section{Introduction}

The problem of establishing closed formulas for the $n$-derivative
of the function $\arctan(x)$ is not straightforward and has been
proved to be important for deriving rapidly convergent series for
$\pi$ \cite{AL10, AL16, Lampret11}. Recently, Many authors investigated the aforementioned problem and derive simple explicit closed-form higher derivative formulas for some classes of functions. In \cite{Boyadzhiev07, Adamchik07,	Cvijovic09} and references therein, the authors found explicit forms
of the derivative polynomials of the hyperbolic, trigonometric
tangent, cotangent and secant functions. Several new closed formulas
for higher-order derivatives have been established for trigonometric
and hyperbolic functions in \cite{XC15}, tangent and cotangent
functions in \cite{Qi15} and arc-sine functions in \cite{QZ15}. In the present work, we study polynomials which are induced from the higher-order derivatives of $\arctan(x)$.

We consider the problem of finding the $n$th derivative of $\arctan(x)$. It is easy to see that there exists a real sequence of polynomials
$$P_{n}(x)=(-1)^{n}n!\operatorname{Im}((x+i)^{n+1})$$ such that
\begin{align}
\frac{d^{n}}{dx^{n}}\left(  \arctan x\right)   &  =\frac{d^{n-1}}{dx^{n-1}%
}\left[  \frac{1}{2i}\left(  \frac{1}{x-i}-\frac{1}{x+i}\right)
\right]
\nonumber\\
&  =\frac{d^{n-1}}{dx^{n-1}}\left[  \operatorname{Im}\left(  \frac{1}%
{x-i}\right)  \right] \nonumber\\
&  =\frac{P_{n-1}\left(  x\right)  }{\left(  1+x^{2}\right)  ^{n}},
\label{pb1}%
\end{align}
where $\operatorname{Im}\left(  z\right)$, denotes the imaginary part of $z$.

\noindent By differentiation (\ref{pb1}) with respect to $x$, we get the recursion relation \cite{Lampret11}
\begin{equation}
P_{0}\left(  x\right)  =1, \ P_{n+1}\left(  x\right)  =\left(
1+x^{2}\right) P_{n}^{\prime}\left(  x\right)  -2\left(  n+1\right)
xP_{n}\left(  x\right)
. \label{rec1}%
\end{equation}
\noindent An explicit expression of $P_{n}\left(  x\right)  $ is obtained by
using the binomial formula
\begin{equation}
P_{n}\left(  x\right)  =\left(  -1\right)  ^{n}n!%
{\displaystyle\sum\limits_{k=0}^{\left\lfloor n/2\right\rfloor }}
\left(  -1\right)  ^{k}\dbinom{n+1}{2k+1}x^{n-2k}, \label{expli}%
\end{equation}
where $\left\lfloor x\right\rfloor$, denotes the integral part of
$x$, that is, the greatest integer not exceeding $x$.

\section{The fundamental properties of the alpha and beta polynomials}
We may rewrite
\begin{align}
\beta_{n}\left(  x\right)   & : =\left(  -1\right)
^{n}\frac{P_{n}\left(
	x\right)  }{n!}\nonumber\\
&  =\operatorname{Im}((x+i)^{n+1})\label{imm}\\
&  =%
{\displaystyle\sum\limits_{k=0}^{n}}
\left(  -1\right)  ^{k}\binom{n+1}{k+1}\cos\left(
\frac{k\pi}{2}\right) x^{n-k}.\nonumber\\
& =\left(  n+1\right)  x^{n}\text{ }_{2}F_{1}\left(  -\frac{n}{2},\frac{1}%
{2}-\frac{n}{2};\frac{3}{2};-\frac{1}{x^{2}}\right), \nonumber
\end{align}
where $_{2}F_{1}\left(  a,b;c;z\right)  $ denotes the Gaussian hypergeometric
function defined by%
\[
_{2}F_{1}\left(  a,b;c;z\right)  =%
{\displaystyle\sum\limits_{n\geq0}}
\frac{\left(  a\right)  _{n}\left(  b\right)  _{n}}{\left(  c\right)  _{n}%
}\frac{z^{n}}{n!},
\]
and $\left(  x\right)  _{n}$ is the Pochhammer symbol: $\left(  x\right)
_{0}=1,$ $\ \left(  x\right)  _{n}=x\left(  x+1\right)  \cdots\left(
x+n-1\right)$.

In $1755$, Euler derived the well-known formula
\[
\arctan\left(  x\right)  =%
{\displaystyle\sum\limits_{n\geq0}}
\frac{2^{2n}\left(  n!\right)  ^{2}}{\left(  2n+1\right)  !}\frac{x^{2n+1}%
}{\left(  1+x^{2}\right)  ^{n+1}}.
\]
As immediate application of (\ref{imm}), we obtain another expansion of the inverse tangent function.
\begin{theorem}
We have
\[
\arctan\left(  x\right)  =%
{\displaystyle\sum\limits_{n\geq0}}
\frac{\beta_{n}(x)}{n+1}\frac{x^{n+1}}{\left(  1+x^{2}\right)  ^{n+1}}.
\]
\end{theorem}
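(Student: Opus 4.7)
The plan is to pull the operator $\operatorname{Im}$ outside the sum and recognize the result as the principal branch of $\log(1+ix)$. Starting from the identity $\beta_{n}(x)=\operatorname{Im}((x+i)^{n+1})$ given in (\ref{imm}), the general term of the series can be rewritten as
\[
\frac{\beta_{n}(x)}{n+1}\,\frac{x^{n+1}}{(1+x^{2})^{n+1}}
=\frac{1}{n+1}\operatorname{Im}\!\left(\frac{x(x+i)}{1+x^{2}}\right)^{\!n+1}
=\frac{1}{n+1}\operatorname{Im}(w^{n+1}),
\]
where I set $w:=x(x+i)/(1+x^{2})$. Using the factorization $1+x^{2}=(x+i)(x-i)$ I would immediately simplify $w$ to $x/(x-i)$, which gives $|w|^{2}=x^{2}/(1+x^{2})<1$ for every real $x$, so the sum converges absolutely and $\operatorname{Im}$ may be interchanged with the summation.

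Next, I would apply the standard logarithmic expansion
\[
\sum_{n\geq 0}\frac{w^{n+1}}{n+1}=-\log(1-w),
\]
valid on the open unit disk, and perform the short algebraic simplification
\[
1-w=1-\frac{x}{x-i}=\frac{-i}{x-i},\qquad -\log(1-w)=\log\!\left(\frac{x-i}{-i}\right)=\log(1+ix).
\]
Taking imaginary parts then yields the claim, since $\operatorname{Im}\log(1+ix)=\arg(1+ix)=\arctan(x)$.

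The only point requiring any care is the choice of branch of the logarithm: for real $x$ the number $1+ix$ has positive real part and so stays in the right half-plane, away from the branch cut of the principal logarithm, and $\arg(1+ix)=\arctan(x)$ holds uniformly for all $x\in\mathbb{R}$. Once $|w|<1$ is observed, everything else is a routine identification of a geometric-logarithmic series, so I do not expect a genuine obstacle — the substance of the proof is entirely contained in recognizing $w=x/(x-i)$ and $1-w=-i/(x-i)$.
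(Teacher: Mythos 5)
Your proof is correct, but it follows a genuinely different route from the paper's. The paper starts from the ``backward Taylor expansion'' $\arctan(x)=\sum_{n\geq1}(-1)^{n+1}\frac{d^{n}}{dx^{n}}\arctan(x)\,\frac{x^{n}}{n!}$ --- i.e.\ the Taylor series of $\arctan$ centered at the point $x$ and evaluated at $0$, combined with $\arctan(0)=0$ --- and then substitutes the closed form $\frac{d^{n}}{dx^{n}}\arctan(x)=P_{n-1}(x)/(1+x^{2})^{n}$ together with $P_{n-1}(x)=(-1)^{n-1}(n-1)!\,\beta_{n-1}(x)$; the signs and factorials cancel to give the stated series. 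You instead sum the series directly: writing the general term as $\frac{1}{n+1}\operatorname{Im}(w^{n+1})$ with $w=x/(x-i)$, observing $|w|^{2}=x^{2}/(1+x^{2})<1$, and identifying $\sum_{n\geq0}w^{n+1}/(n+1)=-\log(1-w)=\log(1+ix)$, whose imaginary part is $\arg(1+ix)=\arctan(x)$. Your computations check out ($1-w=-i/(x-i)=1/(1+ix)$, and the branch issue is handled correctly since $1+ix$ and its reciprocal lie in the right half-plane, where the principal logarithm satisfies $\log(1/z)=-\log z$). What each approach buys: the paper's argument is a one-line reduction once the derivative formula \eqref{pb1} is in hand, but it quietly relies on the convergence of the Taylor expansion of $\arctan$ at $x$ on a disk of radius $\sqrt{1+x^{2}}>|x|$, which is not justified in the text; your argument is self-contained, needs only the logarithmic series on the unit disk, and makes the convergence and branch considerations explicit.
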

\begin{proof}
	From (\ref{expli}) and
	\[
	\arctan\left(  x\right)  =%
	{\displaystyle\sum\limits_{n\geq1}}
	\left(  -1\right)  ^{n+1}\frac{d^{n}}{dx^{n}}\arctan\left(  x\right)
	\text{ }\frac{x^{n}}{n!},
	\]
	we get the 	desired result.
\end{proof}

Now, we give some fundamental results concerning $\beta_{n}(x)$.

\begin{theorem}
	[Generating function]The ordinary generating function of
	$\beta_{n}\left(
	x\right)  $ is given by%
	\begin{equation}
	f_{x}\left(  z\right)  =%
	{\displaystyle\sum\limits_{n\geq0}}
	\beta_{n}\left(  x\right)  z^{n}=\frac{1}{1-2xz+\left(  1+x^{2}\right)  z^{2}%
	}. \label{gen}%
	\end{equation}
	
\end{theorem}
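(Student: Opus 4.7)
The plan is to use the identification $\beta_n(x)=\operatorname{Im}((x+i)^{n+1})$ already recorded in (\ref{imm}) and then just sum a geometric series. Since $\operatorname{Im}$ commutes with convergent sums, for $z$ in a neighborhood of $0$ (namely $|z|<1/|x+i|$) I would write
$$f_x(z)=\sum_{n\geq 0}\operatorname{Im}\bigl((x+i)^{n+1}\bigr)z^n=\operatorname{Im}\left(\sum_{n\geq 0}(x+i)^{n+1}z^n\right)=\operatorname{Im}\left(\frac{x+i}{1-(x+i)z}\right).$$

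Next I would clear the complex denominator by multiplying the numerator and the denominator by the complex conjugate $1-(x-i)z$. The denominator then becomes $(1-(x+i)z)(1-(x-i)z)=1-2xz+(1+x^{2})z^{2}$, which is real, and the numerator becomes $(x+i)(1-(x-i)z)=\bigl(x-(1+x^{2})z\bigr)+i$. Taking imaginary parts, only the constant $1$ survives in the numerator and formula (\ref{gen}) follows. I do not anticipate any real obstacle; the only subtlety is justifying the interchange of $\operatorname{Im}$ with the sum, which is immediate from absolute convergence in the stated disk, and the formal power-series identity then extends the result to the ring $\mathbb{R}[x][[z]]$.

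As a backup, I would note that the same identity can be proved by a short recurrence argument: writing $(x+i)^{n+1}=\alpha_n(x)+i\beta_n(x)$ and multiplying by $x+i$ yields $\alpha_{n+1}=x\alpha_n-\beta_n$ and $\beta_{n+1}=\alpha_n+x\beta_n$, and eliminating $\alpha_n$ produces the three-term recurrence $\beta_{n+2}(x)=2x\beta_{n+1}(x)-(1+x^{2})\beta_n(x)$ with $\beta_0(x)=1$ and $\beta_1(x)=2x$. Multiplying $f_x(z)$ by $1-2xz+(1+x^{2})z^{2}$ collapses all coefficients of $z^{n}$ with $n\geq 2$ to zero and leaves $1$ as the constant term, giving (\ref{gen}). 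Either route is essentially routine; I would present the first because it is a single-line computation from the already-proved identity (\ref{imm}).
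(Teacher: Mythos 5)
Your primary argument is correct and is essentially the paper's proof: both rest on $\beta_n(x)=\operatorname{Im}((x+i)^{n+1})$ followed by a geometric-series summation, the only difference being that the paper writes $\operatorname{Im}$ as $\frac{1}{2i}\bigl((x+i)^{n+1}-(x-i)^{n+1}\bigr)$ and sums two geometric series before recombining, whereas you sum one series and realify the denominator at the end. Your version is, if anything, a slightly cleaner organization of the same computation, and the recurrence-based backup is also sound.
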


\begin{proof}
	We have%
	\begin{align*}
	f_{x}\left(  z\right)   &  =%
	{\displaystyle\sum\limits_{n\geq0}}
	\left(  xz\right)  ^{n}%
	{\displaystyle\sum\limits_{k\geq0}}
	\binom{n+1}{k+1}\cos\left(  \frac{k\pi}{2}\right)  \left(  -x\right)  ^{-k}\\
	&  =%
	{\displaystyle\sum\limits_{n\geq0}}
	\left(  xz\right)  ^{n}\operatorname{Re}\left(
	{\displaystyle\sum\limits_{k\geq0}}
	\binom{n+1}{k+1}\left(  -\frac{i}{x}\right)  ^{k}\right) \\
	&  =%
	{\displaystyle\sum\limits_{n\geq0}}
	\left(  xz\right)  ^{n}\operatorname{Re}\left(
	\frac{i}{x^{n}}\left(
	x-i\right)  ^{n+1}-ix\right) \\
	&  =\frac{1}{2}%
	{\displaystyle\sum\limits_{n\geq0}}
	z^{n}\left(  i\left(  x-i\right)  ^{n+1}-ix\left(  x+i\right)
	^{n+1}\right)
	\\
	&  =\frac{1}{2}i\left(  x-i\right)
	{\displaystyle\sum\limits_{n\geq0}}
	\left(  z\left(  x-i\right)  \right)  ^{n}-\frac{1}{2}i\left(
	x+i\right)
	{\displaystyle\sum\limits_{n\geq0}}
	\left(  z\left(  x+i\right)  \right)  ^{n}\\
	&  =\frac{1}{2}\left(  \frac{i\left(  x-i\right)  }{1-z\left(
		x-i\right) }-\frac{i\left(  x+i\right)  }{1-z\left(  x+i\right)
	}\right).
	\end{align*}
	Thus, the proof of the theorem is completed.
\end{proof}
\noindent In particular, we have
\begin{corollary} For $n\geq 0$, we have
	\[
	\beta_{n}\left(  1\right)  =2^{\frac{n+1}{2}}\cos\left(  \left(
	n-1\right) \frac{\pi}{4}\right)=2^{\frac{n+1}{2}}\sin\left(  \left(
	n+1\right) \frac{\pi}{4}\right).
	\]
\end{corollary}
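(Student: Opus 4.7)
The plan is to substitute $x=1$ directly into the identity $\beta_n(x)=\operatorname{Im}((x+i)^{n+1})$ established in \eqref{imm} and then simplify $(1+i)^{n+1}$ using its polar form. Since $1+i=\sqrt{2}\,e^{i\pi/4}$, we immediately get
\[
(1+i)^{n+1}=2^{(n+1)/2}\,e^{i(n+1)\pi/4}=2^{(n+1)/2}\!\left(\cos\tfrac{(n+1)\pi}{4}+i\sin\tfrac{(n+1)\pi}{4}\right),
\]
and taking imaginary parts yields $\beta_n(1)=2^{(n+1)/2}\sin\bigl((n+1)\pi/4\bigr)$, which is the second form in the corollary.

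To obtain the equivalent first form, I would invoke the elementary identity $\sin\theta=\cos(\theta-\pi/2)$ with $\theta=(n+1)\pi/4$, noting that $(n+1)\pi/4-\pi/2=(n-1)\pi/4$. This shows $\sin((n+1)\pi/4)=\cos((n-1)\pi/4)$ and closes the chain of equalities.

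As a sanity check (and an alternative route if a more self-contained argument is desired), one can instead specialize the generating function \eqref{gen} at $x=1$, obtaining $f_1(z)=1/(1-2z+2z^2)$. The denominator factors as $(1-(1+i)z)(1-(1-i)z)$, and a partial fraction decomposition recovers the same closed form through the geometric-series expansion. However, this detour just reproves $\operatorname{Im}((1+i)^{n+1})=2^{(n+1)/2}\sin((n+1)\pi/4)$, so the direct substitution into \eqref{imm} is the cleanest path. There is no real obstacle here; the only care needed is the trigonometric reindexing that reconciles the $\cos$ and $\sin$ forms.
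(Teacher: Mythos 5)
Your proof is correct and follows essentially the same route as the paper: the paper's one-line proof cites $\operatorname{Re}\bigl(i(1-i)^{n+1}\bigr)$, which equals your $\operatorname{Im}\bigl((1+i)^{n+1}\bigr)$ since $(1-i)^{n+1}$ is the conjugate of $(1+i)^{n+1}$, and both reduce to the same polar-form computation. Your version just spells out the details the paper leaves implicit.
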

\begin{proof}
	It is clear from $\operatorname{Re}\left(  i\left(  1-i\right)
	^{n+1}\right)$.
\end{proof}
\begin{theorem}
	[Generating function]The exponential generating function of
	$\beta_{n}\left(
	x\right)  $ is given by%
	\begin{equation}
		{\displaystyle\sum\limits_{n\geq0}}
	\beta_{n}\left(  x\right) \frac{z^{n}}{n!} =(\cos(z)+x\sin(z
	))e^{xz}. \label{gen2}%
	\end{equation}
	
\end{theorem}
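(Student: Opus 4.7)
The plan is to exploit the representation $\beta_n(x)=\operatorname{Im}((x+i)^{n+1})$ already recorded in equation (\ref{imm}), which is far more convenient than the binomial-sum form and avoids the partial-fractions manipulations used for the ordinary generating function.

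First I would pull the imaginary part outside the exponential series, writing
\[
\sum_{n\geq 0}\beta_n(x)\frac{z^n}{n!}=\operatorname{Im}\!\left(\sum_{n\geq 0}(x+i)^{n+1}\frac{z^n}{n!}\right)=\operatorname{Im}\!\bigl((x+i)\,e^{(x+i)z}\bigr),
\]
the interchange being justified by the absolute convergence of the exponential series for every complex argument. Then I would apply Euler's identity $e^{iz}=\cos z+i\sin z$ to factor $e^{(x+i)z}=e^{xz}(\cos z+i\sin z)$, so that the right-hand side becomes $e^{xz}\operatorname{Im}\!\bigl((x+i)(\cos z+i\sin z)\bigr)$.

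Finally I would expand the product $(x+i)(\cos z+i\sin z)=(x\cos z-\sin z)+i(x\sin z+\cos z)$ and read off the imaginary part, yielding $(\cos z+x\sin z)e^{xz}$, which is precisely (\ref{gen2}).

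I do not anticipate a genuine obstacle here: the only subtlety is the legitimacy of moving $\operatorname{Im}$ through the infinite sum, and that is immediate from termwise convergence of an everywhere-convergent complex power series together with continuity of $\operatorname{Im}$. An alternative, essentially equivalent route would be to differentiate the proposed closed form $(\cos z+x\sin z)e^{xz}$ and verify that its coefficients satisfy the same recurrence as $\beta_n(x)$ derived from (\ref{rec1}), but the direct computation above is shorter and mirrors the style of the preceding generating function proof.
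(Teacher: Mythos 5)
Your argument is correct and is essentially identical to the paper's own proof: both start from $\beta_n(x)=\operatorname{Im}((x+i)^{n+1})$, sum the exponential series to get $\operatorname{Im}\bigl((x+i)e^{(x+i)z}\bigr)$, and extract the imaginary part via Euler's formula. No further comment is needed.
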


\begin{proof}
	From (\ref{imm}), we have%
	\begin{align*}
	\sum_{n\geq0}\operatorname{Im}\left(  \left(  x+i\right)  ^{n+1}\right)
	\frac{z^{n}}{n!}  & =\operatorname{Im}\left(  \left(  x+i\right)  \sum
	_{n\geq0}\frac{\left(  \left(  x+i\right)  z\right)  ^{n}}{n!}\right)  \\
	& =\operatorname{Im}\left(  \left(  x+i\right)  \exp\left(  \left(
	x+i\right)  z\right)  \right)  \\
	& =e^{xz}\operatorname{Im}((x+i)e^{iz})\\
	& =e^{xz}\left(  \cos z+x\sin z\right)  .
	\end{align*}
		Thus, the proof of the theorem is completed.
\end{proof}
\begin{theorem}
	[Recurrence relation]The $\beta_{n}\left(  x\right)  $ satisfy the
	following 	three-term recurrence relation
	\begin{equation}
	\beta_{n+1}\left(  x\right)  =2x\beta_{n}\left(  x\right)  -\left(
	1+x^{2}\right)  \beta_{n-1}\left(  x\right)  , \label{recc}%
	\end{equation}
	with initial conditions $\beta_{0}\left(  x\right)  =1$ and
	$\beta_{1}\left( x\right)  =2x$.
\end{theorem}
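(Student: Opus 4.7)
The plan is to read the recurrence directly off the generating function established in (\ref{gen}). That identity can be rewritten as
\[
\bigl(1-2xz+(1+x^{2})z^{2}\bigr)\sum_{n\geq 0}\beta_{n}(x)z^{n}=1.
\]
Distributing the product on the left and collecting terms by powers of $z$ turns this into a formal power-series identity whose coefficient of $z^{n+1}$ (for $n\geq 1$) reads
\[
\beta_{n+1}(x)-2x\,\beta_{n}(x)+(1+x^{2})\beta_{n-1}(x)=0,
\]
which is exactly (\ref{recc}). The coefficients of $z^{0}$ and $z^{1}$ on the left hand side give $\beta_{0}(x)=1$ and $\beta_{1}(x)-2x\beta_{0}(x)=0$, simultaneously verifying the stated initial conditions.

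As an alternative one may argue from the closed form $\beta_{n}(x)=\operatorname{Im}\bigl((x+i)^{n+1}\bigr)=\tfrac{1}{2i}\bigl((x+i)^{n+1}-(x-i)^{n+1}\bigr)$: the numbers $x\pm i$ are precisely the roots of the characteristic polynomial $\lambda^{2}-2x\lambda+(1+x^{2})$, so every linear combination of $(x+i)^{n+1}$ and $(x-i)^{n+1}$ automatically satisfies the linear recurrence with these coefficients. I expect no real obstacle in either route; the entire argument is a one-line manipulation, and the only thing to be slightly careful about is matching indices correctly when extracting the coefficient of $z^{n+1}$, together with a direct evaluation of $\beta_{0}$ and $\beta_{1}$ from (\ref{imm}) to confirm the base cases.
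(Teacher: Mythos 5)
Your proposal is correct, and both of your routes work. Your main argument starts from the same object as the paper --- the ordinary generating function $f_{x}(z)=\bigl(1-2xz+(1+x^{2})z^{2}\bigr)^{-1}$ of Theorem 2 --- but manipulates it more directly: you simply clear the denominator and read off the coefficient of $z^{n+1}$, which immediately yields $\beta_{n+1}(x)-2x\beta_{n}(x)+(1+x^{2})\beta_{n-1}(x)=0$ together with the initial conditions from the coefficients of $z^{0}$ and $z^{1}$. The paper instead differentiates (\ref{gen}) with respect to $z$, obtains the identity $\bigl(1-2xz+(1+x^{2})z^{2}\bigr)\partial_{z}f_{x}(z)=\bigl(2x-2(1+x^{2})z\bigr)f_{x}(z)$, and equates coefficients there, which produces the recurrence multiplied through by $n+1$; this is a needless detour, and your version is cleaner. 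Your second route --- observing from (\ref{imm}) that $\beta_{n}(x)=\frac{1}{2i}\bigl((x+i)^{n+1}-(x-i)^{n+1}\bigr)$ is a linear combination of powers of the two roots of $\lambda^{2}-2x\lambda+(1+x^{2})$ --- is genuinely different from anything in the paper's proof and has the advantage of not depending on Theorem 2 at all, only on the definition of $\beta_{n}$; it is arguably the most transparent explanation of where the coefficients $2x$ and $-(1+x^{2})$ come from, since $(x+i)+(x-i)=2x$ and $(x+i)(x-i)=1+x^{2}$.
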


\begin{proof}
	By differentiation (\ref{gen}) with respect to $z$, we obtain%
	\[
	\left(  1-2xz+\left(  1+x^{2}\right)  z^{2}\right)
	\frac{\partial}{\partial z}f_{x}\left(  z\right)  =\left(
	2x-2\left(  1+x^{2}\right)  z\right) f_{x}\left(  z\right)  ,
	\]
	or equivalently%
	\[
	\left(  1-2xz+\left(  1+x^{2}\right)  z^{2}\right)
	{\displaystyle\sum\limits_{n\geq0}}
	n\beta_{n}\left(  x\right)  z^{n-1}=\left(  2x-2\left(
	1+x^{2}\right) z\right)
	{\displaystyle\sum\limits_{n\geq0}}
	\beta_{n}\left(  x\right)  z^{n}.
	\]
	After some rearrangement, we get%
	\[
	{\displaystyle\sum\limits_{n\geq0}}
	\left(  n+1\right)  \beta_{n+1}\left(  x\right)  z^{n}=%
	{\displaystyle\sum\limits_{n\geq0}}
	\left(  2x\left(  n+1\right)  \beta_{n}\left(  x\right)  -\left(
	1+x^{2}\right)  \left(  n+1\right)  \beta_{n-1}\left(  x\right)
	\right) z^{n}.
	\]
	Equating the coefficient of $z^{n}$, we get the result.
\end{proof}

\noindent The first few $\beta_{n}\left(  x\right)  $ are listed in Table \ref{Tab1}.
\begin{equation}%
\begin{tabular}
[c]{l}%
$\beta_{0}(x)=1$\\
$\beta_{1}(x)=2x$\\
$\beta_{2}(x)=3x^{2}-1$\\
$\beta_{3}(x)=4x^{3}-4x$\\
$\beta_{4}(x)=5x^{4}-10x^{2}+1$\\
$\beta_{5}(x)=6x^{5}-20x^{3}+6x$%
\end{tabular}
\label{Tab1}%
\end{equation}

\begin{theorem}
	The leading coefficient of $x^{n}$ in $\beta_{n}\left(  x\right)  $
	is $n+1$ and
	the following result holds true%
	\begin{equation}
	\beta_{n}\left(  -x\right)  =\left(  -1\right)  ^{n}\beta_{n}\left(
	x\right)
	. \label{par}%
	\end{equation}
	
\end{theorem}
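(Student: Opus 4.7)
The plan is to derive both assertions directly from the explicit sum representation of $\beta_n(x)$ that is already available in the paper; the recurrence (\ref{recc}) also gives a quick alternative via induction, and I would mention both.

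For the leading coefficient, I would start from $\beta_n(x)=(-1)^n P_n(x)/n!$ together with the formula (\ref{expli}), which yields
\[
\beta_n(x)=\sum_{k=0}^{\lfloor n/2\rfloor}(-1)^k\binom{n+1}{2k+1}x^{n-2k}.
\]
The $k=0$ term contributes $\binom{n+1}{1}x^n=(n+1)x^n$, and all other terms have strictly smaller degree. That immediately gives the leading coefficient $n+1$. (As a sanity check I would also verify this inductively from (\ref{recc}): if the leading term of $\beta_n$ is $(n+1)x^n$ and that of $\beta_{n-1}$ is $nx^{n-1}$, then $2x\beta_n$ contributes $2(n+1)x^{n+1}$ while $(1+x^2)\beta_{n-1}$ contributes $nx^{n+1}$, so $\beta_{n+1}$ has leading coefficient $2(n+1)-n=n+2$, matching the claim.)

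For the parity relation (\ref{par}), the same explicit expansion does the job in one line: every monomial in $\beta_n(x)$ has exponent $n-2k$, which has the same parity as $n$, so
\[
\beta_n(-x)=\sum_{k=0}^{\lfloor n/2\rfloor}(-1)^k\binom{n+1}{2k+1}(-x)^{n-2k}=(-1)^n\beta_n(x).
\]
As a conceptual alternative I would present the following complex-analytic argument using (\ref{imm}): since $(-x+i)^{n+1}=(-1)^{n+1}(x-i)^{n+1}=(-1)^{n+1}\overline{(x+i)^{n+1}}$ and $\operatorname{Im}\bar{z}=-\operatorname{Im} z$, we get $\beta_n(-x)=\operatorname{Im}((-x+i)^{n+1})=(-1)^{n+1}\cdot(-\beta_n(x))=(-1)^n\beta_n(x)$.

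There is no real obstacle here; the only thing to be careful about is choosing the cleanest of the three available tools (explicit formula, generating function, or recurrence) so the proof is short and self-contained. I would lead with the explicit-sum argument for both parts because it handles the two statements simultaneously with essentially no computation, and merely remark that induction on the recurrence (\ref{recc}) gives a second proof of each assertion.
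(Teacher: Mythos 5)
Your proof is correct, but it takes a genuinely different route from the paper's. The paper proves the parity relation (\ref{par}) by observing that the ordinary generating function $f_{x}(z)=1/(1-2xz+(1+x^{2})z^{2})$ from (\ref{gen}) is invariant under the substitution $(x,z)\mapsto(-x,-z)$, so that $\sum_{n\geq0}\beta_{n}(-x)(-z)^{n}=\sum_{n\geq0}\beta_{n}(x)z^{n}$, and then compares coefficients of $z^{n}$; notably, the paper's proof does not address the leading-coefficient claim at all. You instead read both assertions off the explicit expansion $\beta_{n}(x)=\sum_{k=0}^{\lfloor n/2\rfloor}(-1)^{k}\binom{n+1}{2k+1}x^{n-2k}$ obtained from (\ref{expli}): the $k=0$ term gives the leading coefficient $n+1$, and the fact that every exponent $n-2k$ has the same parity as $n$ gives (\ref{par}). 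Both arguments are sound and elementary; yours has the advantage of handling the two statements simultaneously from one formula and of actually covering the leading-coefficient part that the paper leaves implicit, while the paper's generating-function symmetry is the more structural observation and needs no access to the explicit coefficients. Your auxiliary verifications are also correct: the induction on (\ref{recc}) gives leading coefficient $2(n+1)-n=n+2$ for $\beta_{n+1}$ as required, and the conjugation argument $\beta_{n}(-x)=\operatorname{Im}\bigl((-1)^{n+1}\overline{(x+i)^{n+1}}\bigr)=(-1)^{n}\beta_{n}(x)$ is a clean third route via (\ref{imm}).
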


\begin{proof}
	Hence%
	\begin{align*}
	f_{-x}\left(  -z\right)   &  =f_{x}\left(  z\right) \\%
	{\displaystyle\sum\limits_{n\geq0}}
	\beta_{n}\left(  -x\right)  \left(  -z\right)  ^{n}  &  =%
	{\displaystyle\sum\limits_{n\geq0}}
	\beta_{n}\left(  x\right)  z^{n}.
	\end{align*}
	Comparing these two series, we get (\ref{par}).
\end{proof}

\begin{remark}
	Using (\ref{par}) we can write
	\begin{equation}
	P_{n}\left(  x\right)  =n!\beta_{n}\left(  -x\right)  , \label{remarque1}%
	\end{equation}
	and the exponential generating function of $P_{n}\left(  x\right)  $
	is given
	by \[
	{\displaystyle\sum\limits_{n\geq0}}
	P_{n}\left(  x\right)  \frac{z^{n}}{n!}=\frac{1}{1+2xz+\left(
		1+x^{2}\right) z^{2}}.
	\]
	and (\ref{rec1}) becomes
	\begin{equation}
	\beta_{n+1}\left(  x\right)  =2x\beta_{n}\left(  x\right)  -\frac{1+x^{2}%
	}{n+1}\beta_{n}^{\prime}\left(  x\right)  \label{rec11}.
	\end{equation}
\end{remark}
\begin{theorem}
	For $n\geq1$, we have%
	\begin{equation}
	\frac{d}{dx}\beta_{n}\left(  x\right)  =\left(  n+1\right)
	\beta_{n-1}\left(
	x\right).    \label{dif1}%
	\end{equation}
\end{theorem}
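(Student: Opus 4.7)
The plan is to exploit the complex representation $\beta_{n}(x)=\operatorname{Im}((x+i)^{n+1})$ established in (\ref{imm}). Since $\operatorname{Im}$ is an $\mathbb{R}$-linear operator, it commutes with differentiation with respect to the real variable $x$. Applying the power rule to $(x+i)^{n+1}$ (where $i$ is treated as a constant) yields $\tfrac{d}{dx}(x+i)^{n+1}=(n+1)(x+i)^{n}$, and taking imaginary parts then gives
\[
\frac{d}{dx}\beta_{n}(x)=(n+1)\operatorname{Im}\bigl((x+i)^{n}\bigr)=(n+1)\beta_{n-1}(x),
\]
which is exactly (\ref{dif1}). This is essentially a one-line argument, so there is no real technical obstacle; the only thing to check is the legitimacy of moving $d/dx$ past $\operatorname{Im}$, which is immediate.

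If one prefers to avoid the complex representation, two equally short routes are available. First, one could differentiate the explicit formula (\ref{expli}) (after converting from $P_n$ to $\beta_n$ via (\ref{remarque1})) term by term; the required identity $\binom{n+1}{2k+1}(n-2k)=(n+1)\binom{n}{2k+1}$ then reindexes the sum into $(n+1)\beta_{n-1}(x)$. Second, one could start from the ordinary generating function (\ref{gen}) and compute
\[
\frac{\partial}{\partial x}f_{x}(z)=\frac{2z-2xz^{2}}{\bigl(1-2xz+(1+x^{2})z^{2}\bigr)^{2}}=2z\,f_{x}(z)^{2}\,\bigl(1-xz\bigr)\cdot\tfrac{1}{1},
\]
and then recognize, after extracting coefficients, the shifted sequence $(n+1)\beta_{n-1}(x)$; however this detour through a squared generating function is strictly more work than the imaginary-part argument.

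I would therefore present the proof via the $\operatorname{Im}((x+i)^{n+1})$ representation, as it is both the shortest and the most conceptually transparent: the derivative formula is merely the real-variable reflection of the trivial complex identity $\tfrac{d}{dx}(x+i)^{n+1}=(n+1)(x+i)^{n}$. No induction, no coefficient manipulation, and no case analysis are required, and the result holds for all $n\geq 1$ uniformly.
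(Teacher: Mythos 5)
Your main argument — differentiate $\beta_n(x)=\operatorname{Im}((x+i)^{n+1})$ directly, using that $\frac{d}{dx}$ commutes with $\operatorname{Im}$ and that $\frac{d}{dx}(x+i)^{n+1}=(n+1)(x+i)^n=(n+1)\operatorname{Im}^{-1}$-preimage of $\beta_{n-1}$ — is exactly the paper's proof, which simply states that the result "follows easily from (\ref{imm})". Your writeup is correct and just makes explicit the one-line computation the paper leaves to the reader; the alternative routes you sketch are unnecessary but harmless.
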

\begin{proof}
	This follows easily from (\ref{imm}).
\end{proof}
\begin{theorem}
	[Differential Equation]$\beta_{n}\left(  x\right)  $ satisfies the
	linear 	second order ODE%
	\begin{equation}
	\left(  1+x^{2}\right)  \beta_{n}^{\prime\prime}\left(  x\right)
	-2nx\beta_{n}^{\prime}\left(  x\right)  +n\left(  n+1\right)
	\beta_{n}\left(
	x\right)  =0 \label{eqd1}%
	\end{equation}
\end{theorem}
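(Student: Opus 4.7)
The plan is to derive the ODE by combining the differentiation formula (\ref{dif1}) with the three-term recurrence (\ref{recc}); both have just been established, so they are legitimate ingredients. The idea is to express every term in the ODE in terms of $\beta_{n-1}$ and $\beta_{n-2}$, and then observe that the recurrence collapses the resulting expression to zero.

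Concretely, I would first apply (\ref{dif1}) twice to rewrite the derivatives:
$$\beta_n'(x)=(n+1)\beta_{n-1}(x),\qquad \beta_n''(x)=(n+1)\beta_{n-1}'(x)=n(n+1)\beta_{n-2}(x).$$
The left-hand side of (\ref{eqd1}) then becomes
$$n(n+1)\bigl[(1+x^2)\beta_{n-2}(x)\bigr]-2nx(n+1)\beta_{n-1}(x)+n(n+1)\beta_n(x).$$
Next I would use the recurrence (\ref{recc}) in its shifted form
$$\beta_n(x)=2x\beta_{n-1}(x)-(1+x^2)\beta_{n-2}(x),$$
which gives $(1+x^2)\beta_{n-2}(x)=2x\beta_{n-1}(x)-\beta_n(x)$. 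Substituting this into the first bracket and simplifying, all terms cancel in pairs and the expression reduces identically to $0$, which proves (\ref{eqd1}).

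I do not anticipate any real obstacle: the proof is essentially a one-line algebraic check once (\ref{dif1}) and (\ref{recc}) are invoked. The only care needed is a minor one of indexing, namely that the argument applies for $n\geq 2$, while the cases $n=0$ and $n=1$ are handled by direct inspection of $\beta_0(x)=1$ and $\beta_1(x)=2x$, both of which visibly satisfy (\ref{eqd1}). As a sanity check I would also note the alternative, purely complex-analytic, derivation from the representation $\beta_n(x)=\operatorname{Im}((x+i)^{n+1})$: writing $1+x^2=(x+i)(x-i)$ and collecting yields the factor $(x-i)-2x+(x+i)=0$, confirming the identity without appeal to the earlier lemmas.
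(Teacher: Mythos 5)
Your proof is correct and follows essentially the same route as the paper's: the paper differentiates the recursion (\ref{rec11}) and invokes (\ref{dif1}), whereas you substitute (\ref{dif1}) directly into the ODE and then invoke the three-term recurrence (\ref{recc}); since (\ref{rec11}) and (\ref{recc}) are equivalent modulo (\ref{dif1}), the two computations coincide up to rearrangement. Your explicit treatment of the cases $n=0,1$ and the independent check via $\beta_n(x)=\operatorname{Im}\left((x+i)^{n+1}\right)$ are both sound.
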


\begin{proof}
	By differentiating (\ref{rec11}) and using (\ref{dif1}), we find
	(\ref{eqd1}).
\end{proof}

\begin{remark}
It is well known that the classical orthogonal polynomials are
characterized
by being solutions of the differential equation%
\[
A\left(  x\right)  \gamma_{n}^{\prime\prime}\left(  x\right)
+B\left( x\right)  \gamma_{n}^{\prime}\left(  x\right)
+\lambda_{n}\gamma_{n}\left( x\right)  =0,
\]
where $A$ and $B$ are independant of $n$ and $\lambda_{n}$
independant of $x$. It is obvious that the $\ \beta_{n}\left(
x\right)  $ are non-classical orthogonal polynomials.
\end{remark}

Using matrix notation, (\ref{recc}) can be written as%
\[%
\begin{pmatrix}
\beta_{r+1}\left(  x\right)  & \beta_{r+2}\left(  x\right)
\end{pmatrix}
=%
\begin{pmatrix}
\beta_{r}\left(  x\right)  & \beta_{r+1}\left(  x\right)
\end{pmatrix}%
\begin{pmatrix}
0 & -\left(  1+x^{2}\right) \\
1 & 2x
\end{pmatrix}
.
\]
Therefore%
\[%
\begin{pmatrix}
\beta_{n+r}\left(  x\right)  & \beta_{n+r+1}\left(  x\right)
\end{pmatrix}
=%
\begin{pmatrix}
\beta_{r}\left(  x\right)  & \beta_{r+1}\left(  x\right)
\end{pmatrix}%
\begin{pmatrix}
0 & -\left(  1+x^{2}\right) \\
1 & 2x
\end{pmatrix}
^{n}%
\]
for $n\geq0.$ Letting $r=0,$ we get%
\[%
\begin{pmatrix}
\beta_{n}\left(  x\right)  & \beta_{n+1}\left(  x\right)
\end{pmatrix}
=%
\begin{pmatrix}
1 & 2x
\end{pmatrix}%
\begin{pmatrix}
0 & -\left(  1+x^{2}\right) \\
1 & 2x
\end{pmatrix}
^{n}.
\]
\begin{theorem}
	We have%
	\[
	\beta_{n}\left(  x\right)  =%
	\begin{pmatrix}
	1 & 2x
	\end{pmatrix}%
	\begin{pmatrix}
	0 & -\left(  1+x^{2}\right) \\
	1 & 2x
	\end{pmatrix}
	^{n}%
	\begin{pmatrix}
	1\\
	0
	\end{pmatrix}
	.
	\]
	
\end{theorem}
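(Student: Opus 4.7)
The plan is to read off the theorem as an immediate consequence of the row-vector identity displayed in the lines immediately preceding the statement. There, iteration of the three-term recurrence (\ref{recc}) has already been used to establish
\[
\begin{pmatrix} \beta_{n}(x) & \beta_{n+1}(x) \end{pmatrix}
= \begin{pmatrix} 1 & 2x \end{pmatrix}
\begin{pmatrix} 0 & -(1+x^{2}) \\ 1 & 2x \end{pmatrix}^{n}.
\]
Since $\beta_{n}(x)$ is simply the first coordinate of the row vector on the left, I would right-multiply both sides by the column vector $\bigl(\begin{smallmatrix}1\\0\end{smallmatrix}\bigr)$. On the left this projects onto $\beta_{n}(x)$, and on the right it reproduces verbatim the expression in the statement of the theorem.

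For rigour I would also note, in one line, why the preceding row-vector identity is itself valid: it is proved by induction on $n$, the base case $n=0$ being $(\beta_{0}(x),\beta_{1}(x))=(1,2x)$ and the inductive step amounting exactly to the observation that post-multiplication by the companion matrix $\bigl(\begin{smallmatrix}0 & -(1+x^{2})\\ 1 & 2x\end{smallmatrix}\bigr)$ sends $(\beta_{r}(x),\beta_{r+1}(x))$ to $(\beta_{r+1}(x),\beta_{r+2}(x))$, which is just a reformulation of (\ref{recc}). Consequently there is no genuine obstacle in the proof; the only thing to verify is that the companion matrix correctly encodes the recurrence, and this is a direct check against (\ref{recc}).
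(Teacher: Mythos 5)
Your proposal is correct and follows exactly the route the paper takes: the paper establishes the row-vector identity $\begin{pmatrix}\beta_{n}(x) & \beta_{n+1}(x)\end{pmatrix}=\begin{pmatrix}1 & 2x\end{pmatrix}M^{n}$ by iterating the companion-matrix form of (\ref{recc}) with $r=0$, and the theorem is then just the first coordinate, i.e.\ right-multiplication by $\bigl(\begin{smallmatrix}1\\0\end{smallmatrix}\bigr)$. Your added one-line verification that the companion matrix encodes (\ref{recc}) is a correct and harmless extra check.
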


\noindent It follows from the general theory of determinant \cite{VD99} that
$\beta_{n}\left(  x\right)  $ is the following $n\times n$
determinant%
\[
\beta_{n}\left(  x\right)  =%
\begin{vmatrix}
2x & -\left(  1+x^{2}\right)  & 0 & \cdots & 0\\
-1 & 2x & -\left(  1+x^{2}\right)  &  & \vdots\\
0 & -1 & \ddots & \ddots & 0\\
&  & \ddots & \ddots & -\left(  1+x^{2}\right) \\
0 & \cdots & 0 & -1 & 2x
\end{vmatrix}.
\]
In order to compute the above determinant, we recall that the
Chebyshev polynomials $U_{n}\left(  x\right)  $ of the second kind
is a polynomial of degree $n$ in $x$ defined by
\[
U_{n}\left(  x\right)  =\frac{\sin\left(  n+1\right)
	\theta}{\sin\theta }\ \text{ when }\ x=\cos\theta,
\]
and can also be written as determinant identity
\begin{equation}
U_{n}\left(  x\right)  =%
\begin{vmatrix}
2x & 1 & 0 & \cdots & 0\\
1 & 2x & 1 &  & \vdots\\
0 & 1 & \ddots & \ddots & 0\\
&  & \ddots & \ddots & 1\\
0 & \cdots & 0 & 1 & 2x
\end{vmatrix}
. \label{2nd}%
\end{equation}

The next lemma is used in the proof of Theorem \ref{THAZ}
\begin{lemma}
	\label{lemme1}For $a,b,c$ non zero, we have
\begin{equation}%
\begin{vmatrix}
b & c & 0 & \cdots & 0\\
a & b & c &  & \vdots\\
0 & a & \ddots & \ddots & 0\\
\vdots &  & \ddots & \ddots & c\\
0 & \cdots & 0 & a & b
\end{vmatrix}
=\left(  \sqrt{ac}\right)  ^{n}U_{n}\left(
\frac{b}{2\sqrt{ac}}\right)  .
\label{3rd}%
\end{equation}
\end{lemma}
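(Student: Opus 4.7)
The plan is to reduce the general tridiagonal determinant to the Chebyshev determinant (\ref{2nd}) by a diagonal similarity that symmetrizes the off-diagonal entries. Let $M_n$ denote the $n\times n$ matrix on the left-hand side of (\ref{3rd}), and let $D=\mathrm{diag}(r_{1},\dots,r_{n})$ with $r_{i}=(\sqrt{c/a})^{\,i-1}$. Since $\det(DM_nD^{-1})=\det M_n$, I would compute $DM_nD^{-1}$ entry by entry: the subdiagonal entry $a$ in row $i$, column $i-1$ becomes $(r_i/r_{i-1})\,a=\sqrt{c/a}\cdot a=\sqrt{ac}$, and similarly the superdiagonal entry $c$ becomes $(r_i/r_{i+1})\,c=\sqrt{a/c}\cdot c=\sqrt{ac}$, while the diagonal entries $b$ are preserved. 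Thus
\[
\det M_n=\begin{vmatrix}
b & \sqrt{ac} & 0 & \cdots & 0\\
\sqrt{ac} & b & \sqrt{ac} &  & \vdots\\
0 & \sqrt{ac} & \ddots & \ddots & 0\\
\vdots &  & \ddots & \ddots & \sqrt{ac}\\
0 & \cdots & 0 & \sqrt{ac} & b
\end{vmatrix}.
\]

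Next I would factor $\sqrt{ac}$ out of each of the $n$ rows, which pulls a scalar $(\sqrt{ac})^{n}$ in front and leaves a tridiagonal matrix with $b/\sqrt{ac}$ on the diagonal and $1$ on both off-diagonals. Comparing directly with the determinantal representation (\ref{2nd}) of $U_{n}$ with the identification $2x=b/\sqrt{ac}$, i.e.\ $x=b/(2\sqrt{ac})$, yields exactly $(\sqrt{ac})^{n}U_{n}\!\bigl(b/(2\sqrt{ac})\bigr)$, which is (\ref{3rd}).

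As an alternative (or as a verification) one can avoid the similarity trick by expanding $M_n$ along its last row to obtain the recurrence $D_{n}=bD_{n-1}-ac\,D_{n-2}$, with $D_{0}=1$ and $D_{1}=b$; then setting $E_{n}:=(\sqrt{ac})^{n}U_{n}(b/(2\sqrt{ac}))$ and invoking the Chebyshev recurrence $U_{n+1}(x)=2xU_{n}(x)-U_{n-1}(x)$ one checks that $E_{n+1}=bE_{n}-ac\,E_{n-1}$ with $E_{0}=1$, $E_{1}=b$, so $D_{n}=E_{n}$ for all $n$.

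The only delicate point is the presence of $\sqrt{ac}$ when $ac$ is not a positive real: the factors $\sqrt{a/c}$ and $\sqrt{c/a}$ in the diagonal conjugating matrix, as well as the $(\sqrt{ac})^{n}$ factored out of the rows, must be interpreted consistently with a single branch of the square root. Since the $\sqrt{ac}$ terms always appear in compensating pairs (either as $\sqrt{ac}\cdot\sqrt{ac}=ac$ or as $(\sqrt{ac})^{n}$ matched against the argument $b/(2\sqrt{ac})$ of $U_{n}$, and $z^{n}U_{n}(w/z)$ is a polynomial in $z^{2}$ and $w$), the final identity is independent of the branch chosen, so no case analysis is actually needed — which is the step I would verify carefully in writing up the proof.
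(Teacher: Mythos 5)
Your proof is correct and follows essentially the same route as the paper: your diagonal conjugation $D M_n D^{-1}$ with $r_i=(\sqrt{c/a})^{\,i-1}$ is exactly the ``symmetrization process'' that the paper outsources to the Al-Hassan reference, after which both arguments reduce to the determinantal form (\ref{2nd}) of $U_n$. Your spelled-out details, the recurrence check $D_n=bD_{n-1}-ac\,D_{n-2}$, and the observation that $z^nU_n(w/z)$ depends only on $z^2$ (so the branch of $\sqrt{ac}$ is immaterial) are all sound and in fact make the lemma more self-contained than the published proof.
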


\begin{proof}
	From (\ref{2nd}), we have%
	\[
	\left(  \sqrt{ac}\right)  ^{n}U_{n}\left(  \frac{b}{2\sqrt{ac}}\right)  =%
	\begin{vmatrix}
	b & \sqrt{ac} & 0 & \cdots & 0\\
	\sqrt{ac} & b & \sqrt{ac} &  & \vdots\\
	0 & \sqrt{ac} & \ddots & \ddots & 0\\
	\vdots &  & \ddots & \ddots & \sqrt{ac}\\
	0 & \cdots & 0 & \sqrt{ac} & b
	\end{vmatrix}
	.
	\]
Now, by the symmetrization process \cite{Hassan}, we get the result.
\end{proof}

\begin{theorem}\label{THAZ}
	For $n\geq1,$ we have%
	\begin{align*}
	\frac{d^{n}}{dx^{n}}\left(  \arctan\left(  x\right)  \right)   &
	=\frac{\left(  n-1\right)  !}{\left(  1+x^{2}\right)  ^{n}}\operatorname{Im}%
	((i-x)^{n})\\
	&  =\frac{\left(  n-1\right)  !}{\left(  1+x^{2}\right)  ^{\frac{n+1}{2}}%
	}U_{n-1}\left(  \frac{-x}{\sqrt{1+x^{2}}}\right)  ,
	\end{align*}
	where $U_{n}$ is the $n$th Chebyshev polynomial of the second kind.
\end{theorem}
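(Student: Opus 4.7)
The plan is to prove the two asserted equalities separately, the first by a direct sign computation from (\ref{pb1}) and the second by polar representation of $i-x$ combined with the definition of the Chebyshev polynomials of the second kind.

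First I would recall from (\ref{pb1}) together with the defining formula $P_{n-1}(x)=(-1)^{n-1}(n-1)!\operatorname{Im}((x+i)^n)$ that
$$\frac{d^{n}}{dx^{n}}\arctan(x)=\frac{(-1)^{n-1}(n-1)!}{(1+x^{2})^{n}}\operatorname{Im}((x+i)^n).$$
Since $x$ is real, $\overline{x+i}=x-i$, so $\operatorname{Im}((x+i)^n)=-\operatorname{Im}((x-i)^n)$. Combining this with $(i-x)^n=(-1)^n(x-i)^n$ gives $(-1)^{n-1}\operatorname{Im}((x+i)^n)=\operatorname{Im}((i-x)^n)$, which yields the first equality.

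For the second equality I would use the polar form. Writing $i-x=\sqrt{1+x^{2}}\,e^{i\theta}$ with $\cos\theta=-x/\sqrt{1+x^{2}}$ and $\sin\theta=1/\sqrt{1+x^{2}}$, one has
$$\operatorname{Im}((i-x)^n)=(1+x^{2})^{n/2}\sin(n\theta).$$
By the defining identity $U_{n-1}(\cos\theta)=\sin(n\theta)/\sin\theta$, this becomes $(1+x^{2})^{n/2}\sin\theta\,U_{n-1}(\cos\theta)=(1+x^{2})^{(n-1)/2}U_{n-1}(-x/\sqrt{1+x^{2}})$. Dividing by $(1+x^{2})^{n}$ produces the stated formula. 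An alternative route would be to apply Lemma \ref{lemme1} to the determinantal expression for $\beta_{n-1}(x)$ with $a=-1$, $b=2x$, $c=-(1+x^{2})$ (so that $ac=1+x^{2}>0$), then invoke (\ref{remarque1}) and the parity relation (\ref{par}) to reach the $U_{n-1}$ with argument $-x/\sqrt{1+x^{2}}$; this is essentially the same calculation repackaged through the matrix picture developed just before the theorem.

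I do not expect a serious obstacle; the proof is essentially a chain of identifications. The main point requiring care is the sign bookkeeping: one must track the factor $(-1)^{n-1}$ from the definition of $P_{n-1}$, the factor $(-1)^n$ from $(i-x)^n=(-1)^n(x-i)^n$, and the sign coming from complex conjugation, and verify that they combine consistently so that the argument of $U_{n-1}$ ends up as $-x/\sqrt{1+x^{2}}$ rather than $+x/\sqrt{1+x^{2}}$.
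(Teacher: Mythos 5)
Your proof is correct, and both equalities check out: the sign bookkeeping $(-1)^{n-1}\operatorname{Im}((x+i)^{n})=\operatorname{Im}((i-x)^{n})$ is right, and the polar-form step with $\cos\theta=-x/\sqrt{1+x^{2}}$, $\sin\theta=1/\sqrt{1+x^{2}}$ (so $\theta\in(0,\pi)$ and $\sin\theta\neq 0$) legitimately converts $\operatorname{Im}((i-x)^{n})=(1+x^{2})^{n/2}\sin(n\theta)$ into $(1+x^{2})^{(n-1)/2}U_{n-1}(-x/\sqrt{1+x^{2}})$. Your main route, however, differs from the paper's: the authors do not touch the polar representation at all. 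They first establish $\beta_{n}(x)=(\sqrt{1+x^{2}})^{n}U_{n}\bigl(x/\sqrt{1+x^{2}}\bigr)$ by evaluating the tridiagonal determinant for $\beta_{n}$ through Lemma \ref{lemme1} (the symmetrized determinant identity for $U_{n}$), and then combine this with (\ref{pb1}) and the relation $P_{n}(x)=n!\,\beta_{n}(-x)$ from (\ref{remarque1}) to flip the argument to $-x/\sqrt{1+x^{2}}$. That is exactly the alternative you sketch at the end, so you have in effect found both proofs. Your primary argument is the more elementary and self-contained of the two — it needs only the trigonometric definition $U_{n-1}(\cos\theta)=\sin(n\theta)/\sin\theta$ and elementary complex arithmetic, with no determinant machinery — whereas the paper's route buys the closed form (\ref{explicit beta}) for $\beta_{n}$ itself as a byproduct, which is then reused later (for the zeros of $\beta_{n}$ and the connections to Fibonacci and matching polynomials).
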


\begin{proof}
	We apply Lemma \ref{lemme1} with $a=-1,b=2x$ and $c=-\left(
	1+x^{2}\right)  $ to obtain
	\begin{equation}
	\beta_{n}\left(  x\right)  =\left(  \sqrt{1+x^{2}}\right)
	^{n}U_{n}\left(
	\frac{x}{\sqrt{1+x^{2}}}\right)  . \label{explicit beta}%
	\end{equation}
	From (\ref{pb1}) and (\ref{remarque1}), we get the desired result.
\end{proof}

\begin{corollary}
	For $n\geq1,$ we have
	\begin{align*}
	\frac{d^{n}}{dx^{n}}\left(  \tanh^{-1}\left(  x\right)  \right)   &
	=\frac{\left(  n-1\right)  !}{2\left(  1-x^{2}\right)  ^{n}}((x+1)^{n}%
	-(x-1)^{n})\\
	&  =\frac{1}{i^{n-1}}\frac{\left(  n-1\right)  !}{\left(
		1-x^{2}\right) ^{\frac{n+1}{2}}}U_{n-1}\left(
	\frac{ix}{\sqrt{1-x^{2}}}\right)
	\end{align*}
	
\end{corollary}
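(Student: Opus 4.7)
The plan is to handle the two equalities separately, both reducing to elementary manipulations once the right starting point is chosen.

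For the first equality, I would begin with the classical identity $\tanh^{-1}(x)=\tfrac12\log\frac{1+x}{1-x}$, which gives $\frac{d}{dx}\tanh^{-1}(x)=\frac{1}{1-x^2}$. Splitting via partial fractions,
\[
\frac{1}{1-x^2}=\frac{1}{2}\left(\frac{1}{1-x}+\frac{1}{1+x}\right),
\]
and differentiating $n-1$ times term-by-term yields
\[
\frac{d^n}{dx^n}\tanh^{-1}(x)=\frac{(n-1)!}{2}\left(\frac{1}{(1-x)^n}+\frac{(-1)^{n-1}}{(1+x)^n}\right).
\]
Combining over the common denominator $(1-x^2)^n$ and using $(-1)^{n-1}(1-x)^n=-(x-1)^n$ produces the desired $\frac{(n-1)!}{2(1-x^2)^n}\bigl((x+1)^n-(x-1)^n\bigr)$.

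For the second equality, the cleanest route is to invoke the identity $\tanh^{-1}(x)=-i\arctan(ix)$, which is a consequence of $\arctan(z)=\frac{1}{2i}\log\frac{1+iz}{1-iz}$. The chain rule then gives $\frac{d^n}{dx^n}\tanh^{-1}(x)=-i^{\,n+1}(\arctan)^{(n)}(ix)$. Substituting $y=ix$ into Theorem \ref{THAZ} and using the parity $U_{n-1}(-u)=(-1)^{n-1}U_{n-1}(u)$ yields
\[
(\arctan)^{(n)}(ix)=(-1)^{n-1}\frac{(n-1)!}{(1-x^2)^{(n+1)/2}}\,U_{n-1}\!\left(\frac{ix}{\sqrt{1-x^2}}\right),
\]
after noting $1+(ix)^2=1-x^2$. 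Collecting the scalar factor $-i^{\,n+1}(-1)^{n-1}=i^{\,1-n}=1/i^{\,n-1}$ (since $i^{4n}=1$) completes the proof.

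The step I expect to be most error-prone is bookkeeping for the second equality: one must simultaneously track the factor $-i$ from $\tanh^{-1}(x)=-i\arctan(ix)$, the $i^n$ from differentiating through $ix$, the sign $(-1)^{n-1}$ from the parity of $U_{n-1}$, and the choice of branch for $\sqrt{1-x^2}$ inside the Chebyshev argument. I would sanity-check the result by matching $n=1$ and $n=2$ against the elementary derivatives $\frac{1}{1-x^2}$ and $\frac{2x}{(1-x^2)^2}$ before trusting the general formula. As a fully self-contained alternative that bypasses complex substitution, one can evaluate the Binet formula $U_{n-1}(y)=\frac{(y+\sqrt{y^2-1})^n-(y-\sqrt{y^2-1})^n}{2\sqrt{y^2-1}}$ at $y=ix/\sqrt{1-x^2}$, where $y\pm\sqrt{y^2-1}=i(x\pm 1)/\sqrt{1-x^2}$, and read off $\frac{(x+1)^n-(x-1)^n}{2}=\frac{(1-x^2)^{(n-1)/2}}{i^{\,n-1}}U_{n-1}\!\left(\frac{ix}{\sqrt{1-x^2}}\right)$, so that the second expression follows from the first by pure algebra.
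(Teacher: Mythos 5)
Your proof is correct, and its core — the substitution $\tanh^{-1}(x)=\tfrac{1}{i}\arctan(ix)=-i\arctan(ix)$ followed by an application of the already-established arctangent formulas — is exactly the route the paper takes: the paper writes $\frac{d^{n}}{dx^{n}}\tanh^{-1}(x)=\frac{(-1)^{n}}{i^{n+1}}\frac{P_{n-1}(ix)}{(1-x^{2})^{n}}=\frac{1}{i^{n-1}}\frac{P_{n-1}(-ix)}{(1-x^{2})^{n}}$ using (\ref{pb1}) and the parity of $P_{n-1}$, and then stops, leaving the reduction to the two displayed expressions implicit. Where you genuinely diverge is in the first equality: instead of extracting $((x+1)^{n}-(x-1)^{n})/2$ from $P_{n-1}(-ix)=(n-1)!\operatorname{Im}\bigl((i(x+1))^{n}\bigr)$, you prove it independently and elementarily from the partial-fraction decomposition $\frac{1}{1-x^{2}}=\frac12\bigl(\frac{1}{1-x}+\frac{1}{1+x}\bigr)$, which is self-contained, avoids complex arithmetic entirely for that line, and doubles as a sanity check on the second line via your Binet-formula computation. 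The paper's approach buys uniformity (everything flows from the $P_{n}$/$\beta_{n}$ machinery and Theorem \ref{THAZ}); yours buys transparency and an internal consistency check, at the cost of the somewhat delicate bookkeeping of powers of $i$ and the parity of $U_{n-1}$ that you correctly flag — and which, as you carried it out, does close up to the stated factor $1/i^{\,n-1}$.
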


\begin{proof}
	Since $\tanh^{-1}\left(  x\right)  =\frac{1}{i}\arctan(ix),$ we have%
	\begin{align*}
	\frac{d^{n}}{dx^{n}}\left(  \tanh^{-1}\left(  x\right)  \right)   &
	=\frac{(-1)^{n}}{i^{n+1}}\frac{P_{n-1}\left(  ix\right)  }{\left(
		1-x^{2}\right)  ^{n}}\\
	&  =\frac{1}{i^{n-1}}\frac{P_{n-1}\left(  -ix\right)  }{\left(  1-x^{2}%
		\right)  ^{n}}.
	\end{align*}
	Thus, the proof of the Corollary is completed.
\end{proof}

\begin{theorem}
	The roots of $\beta_{n}\left(  x\right)  $ of degree $n\geq1$ has
	$n$ simple zeros in $\mathbb{R}$ at
	\begin{equation}
	x_{k}=\cot\left(  \frac{k\pi}{n+1}\right)  ,\text{ for each
	}k=1,\ldots,n.
	\label{zeros}%
	\end{equation}
	
\end{theorem}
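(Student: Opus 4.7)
The plan is to exploit the imaginary-part representation $\beta_{n}(x)=\operatorname{Im}((x+i)^{n+1})$ established in (\ref{imm}), which converts the root problem into a purely trigonometric one. First I would observe that $\beta_{n}(x_{0})=0$ is equivalent to requiring $(x_{0}+i)^{n+1}\in\mathbb{R}$. Writing $x_{0}+i$ in polar form as $re^{i\phi}$ with $r=\sqrt{1+x_{0}^{2}}>0$, the argument $\phi$ lies strictly in $(0,\pi)$ because $\operatorname{Im}(x_{0}+i)=1>0$ rules out $\phi=0$ and $\phi=\pi$. Then $(x_{0}+i)^{n+1}=r^{n+1}e^{i(n+1)\phi}$ is real precisely when $(n+1)\phi\equiv 0\pmod{\pi}$, i.e., $\phi=k\pi/(n+1)$, and the range $\phi\in(0,\pi)$ selects exactly the integers $k=1,2,\ldots,n$.

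Next I would invert the relation between $x_{0}$ and $\phi$. From $r\sin\phi=\operatorname{Im}(x_{0}+i)=1$ and $r\cos\phi=\operatorname{Re}(x_{0}+i)=x_{0}$ one reads off $x_{0}=\cot\phi$, yielding the candidate roots $x_{k}=\cot(k\pi/(n+1))$ for $k=1,\ldots,n$. Since $\cot$ is strictly decreasing on $(0,\pi)$, these $n$ values are pairwise distinct real numbers; and because the leading coefficient of $\beta_{n}$ is $n+1\neq 0$ (as already noted in the paper), $\beta_{n}$ has degree exactly $n$ and therefore at most $n$ zeros. Hence the $x_{k}$ exhaust the root set and each zero is simple.

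As an alternative route, one may start instead from the Chebyshev representation (\ref{explicit beta}): since the classical zeros of $U_{n}$ are $\cos(k\pi/(n+1))$, solving $x/\sqrt{1+x^{2}}=\cos(k\pi/(n+1))$ and using $\sqrt{1+x^{2}}=1/\sin(k\pi/(n+1))$ at the solution again produces $x_{k}=\cot(k\pi/(n+1))$. I expect no serious obstacle in either version; the only point requiring care is the bookkeeping on the admissible range of $\phi$ (equivalently of $k$) so that the $n$ distinct real roots exactly match the degree of $\beta_{n}$, ensuring that nothing is missed and nothing is counted twice.
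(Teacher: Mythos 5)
Your argument is correct, and your primary route is genuinely different from the paper's. The paper proves the theorem by quoting the zeros $\cos\bigl(k\pi/(n+1)\bigr)$ of the Chebyshev polynomial $U_{n}$ and pulling them back through the factorization $\beta_{n}(x)=\bigl(\sqrt{1+x^{2}}\bigr)^{n}U_{n}\bigl(x/\sqrt{1+x^{2}}\bigr)$ of (\ref{explicit beta}) --- essentially your ``alternative route.'' Your main argument instead works directly from $\beta_{n}(x)=\operatorname{Im}\bigl((x+i)^{n+1}\bigr)$: the zero condition becomes $(x_{0}+i)^{n+1}\in\mathbb{R}$, the polar form with $\phi\in(0,\pi)$ forces $(n+1)\phi=k\pi$ with $k=1,\dots,n$, and $x_{0}=\cot\phi$ follows from $r\sin\phi=1$, $r\cos\phi=x_{0}$. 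This is more self-contained (it needs no facts about $U_{n}$ beyond what the paper already encodes in $\beta_{n}$ itself) and it cleanly handles the counting: strict monotonicity of $\cot$ on $(0,\pi)$ gives $n$ distinct real roots, which matches the degree $n$ and hence forces simplicity --- a point the paper's proof leaves implicit. The paper's approach buys a one-line derivation once (\ref{explicit beta}) is in hand, but depends on that earlier determinant computation; yours stands on the definition (\ref{imm}) alone.
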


\begin{proof}
	Since the zeros of $U_{n}\left(  z\right)  $ are
	\[
	z_{k}=\cos\left(  \frac{k\pi}{n+1}\right)  ,k=1,\ldots,n,
	\]
	it follows from (\ref{explicit beta}) and by setting
	\[
	z_{k}=\frac{x_{k}}{\sqrt{1+x_{k}^{2}}}%
	\]
	that the zeros of $\beta_{n}\left(  x\right)  $ are given by $\left(
	\ref{zeros}\right)  .$
\end{proof}

It is well-known that for any sequence of monic polynomials $p_{n}\left(
x\right)  $ whose degrees increase by one from one member to the next, they
satisfy an\ extended recurrence relation \cite{Gautschi}
\[
p_{n+1}\left(  x\right)  =xp_{n}\left(  x\right)  -\sum_{j=0}^{n}%
\genfrac{[}{]}{0pt}{}{n}{j}%
p_{n-j}\left(  x\right)  ,
\]
and the zeros of $p_{n}\left(  x\right)  $ are the eigenvalues of the $n\times
n$ Hessenberg matrix of the coefficients $%
\genfrac{[}{]}{0pt}{}{n}{j}%
$ arranged upward in the $k$th column
\[
H_{n}=%
\begin{pmatrix}%
\genfrac{[}{]}{0pt}{}{0}{0}%
&
\genfrac{[}{]}{0pt}{}{1}{1}%
&
\genfrac{[}{]}{0pt}{}{2}{2}%
& \cdots &
\genfrac{[}{]}{0pt}{}{n-2}{n-2}%
&
\genfrac{[}{]}{0pt}{}{n-1}{n-1}%
\\
1 &
\genfrac{[}{]}{0pt}{}{1}{0}%
&
\genfrac{[}{]}{0pt}{}{2}{1}%
& \cdots &
\genfrac{[}{]}{0pt}{}{n-2}{n-3}%
&
\genfrac{[}{]}{0pt}{}{n-1}{n-2}%
\\
0 & 1 &
\genfrac{[}{]}{0pt}{}{2}{0}%
& \cdots &
\genfrac{[}{]}{0pt}{}{n-2}{n-4}%
&
\genfrac{[}{]}{0pt}{}{n-1}{n-3}%
\\
\vdots & \vdots &  &  & \vdots & \vdots\\
0 & 0 & 0 & \cdots &
\genfrac{[}{]}{0pt}{}{n-2}{0}%
&
\genfrac{[}{]}{0pt}{}{n-1}{1}%
\\
0 & 0 & 0 & \cdots & 1 &
\genfrac{[}{]}{0pt}{}{n-1}{0}%
\end{pmatrix}
.
\]

Let
\begin{equation}
\pi_{n}\left(  x\right)  :=\frac{\beta_{n}\left(  x\right)  }{n+1}%
,\label{BEN1}%
\end{equation}
be the monic polynomial of degree $n$.

\begin{theorem}
	For $n\geq0,$ we have%
	\begin{equation}
	\pi_{0}\left(  x\right)  =1;\text{ }\pi_{n+1}\left(  x\right)  =x\pi
	_{n}\left(  x\right)  -\sum_{j=1}^{n}\frac{2^{j+1}}{j+1}\binom{n}{j}\left\vert
	B_{j+1}\right\vert \pi_{n-j}\left(  x\right)  .\label{ZE}%
	\end{equation}
	where $B_{n}$ denote the Bernoulli numbers.
\end{theorem}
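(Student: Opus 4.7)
The plan is to derive the extended recurrence from a functional equation satisfied by the exponential generating function of $\pi_{n}$, then identify the resulting coefficient series with the classical Bernoulli expansion of $\cot z$.

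First, I would obtain the EGF $E(x,z):=\sum_{n\ge 0}\pi_{n}(x)z^{n}/n!$. Since $(n+1)\pi_{n}=\beta_{n}$, we have $\partial_{z}(zE(x,z))=\sum_{n\ge 0}\beta_{n}(x)z^{n}/n!$, which by (\ref{gen2}) equals $e^{xz}(\cos z+x\sin z)$. Integrating termwise and using the elementary antiderivatives of $e^{xt}\cos t$ and $e^{xt}\sin t$, the mixed contributions cancel and one obtains $zE(x,z)=e^{xz}\sin z$, so
\[
E(x,z)=\frac{e^{xz}\sin z}{z}.
\]

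Next, logarithmic differentiation gives $\partial_{z}E/E=x+\cot z-1/z$, hence
\[
\partial_{z}E(x,z)=xE(x,z)-\Gamma(z)E(x,z),\qquad \Gamma(z):=\frac{1}{z}-\cot z.
\]
Since $\partial_{z}E$ is the EGF of $\pi_{n+1}$, writing $\Gamma(z)=\sum_{j\ge 1}\gamma_{j}z^{j}/j!$ (the $j=0$ term vanishes because $\Gamma(0)=0$) and applying the Cauchy product yields
\[
\pi_{n+1}(x)=x\pi_{n}(x)-\sum_{j=1}^{n}\binom{n}{j}\gamma_{j}\,\pi_{n-j}(x).
\]

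Finally, I would invoke the classical expansion $z\cot z=\sum_{n\ge 0}(-1)^{n}2^{2n}B_{2n}z^{2n}/(2n)!$ to obtain
\[
\Gamma(z)=\sum_{n\ge 1}\frac{2^{2n}|B_{2n}|}{(2n)!}\,z^{2n-1},
\]
using $(-1)^{n+1}B_{2n}=|B_{2n}|$ for $n\ge 1$. Reading off $\gamma_{j}=j!\,[z^{j}]\Gamma(z)$: for odd $j=2n-1$ this produces $\gamma_{j}=2^{j+1}|B_{j+1}|/(j+1)$, while for even $j\ge 2$ the vanishing of $B_{j+1}$ forces $\gamma_{j}=0$, consistent with the same formula. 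This establishes the theorem. The only real obstacle is producing the EGF for $\pi_{n}$, the key point being the remarkable collapse of $\int_{0}^{z}e^{xt}(\cos t+x\sin t)\,dt$ to $e^{xz}\sin z$; once this is secured, the remainder is logarithmic differentiation and bookkeeping against a well-known Bernoulli series.
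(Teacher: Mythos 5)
Your proof is correct and follows essentially the same route as the paper: both rest on the exponential generating function identity $\sum_{n\ge 0}\pi_n(x)z^n/n! = e^{xz}\sin z/z$ (obtained from $\int e^{xt}(\cos t + x\sin t)\,dt = e^{xt}\sin t$) together with the Bernoulli expansion of $\cot z - 1/z$. The only difference is organizational: you derive the functional equation $\partial_z E = \bigl(x - (1/z - \cot z)\bigr)E$ by logarithmic differentiation and read off coefficients, whereas the paper computes the EGFs of both sides of the recurrence separately and checks that each equals $\frac{e^{xz}}{z^{2}}\bigl(z\cos z + (xz-1)\sin z\bigr)$.
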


\begin{proof}
	By using generating function techniques, we can verify (\ref{ZE}) directly.
	From (\ref{BEN1}) and (\ref{gen2}), we have%
	\begin{multline*}%
{\displaystyle\sum\limits_{n\geq0}}
\left(  x\pi_{n}\left(  x\right)  -\sum_{j=1}^{n}\frac{2^{j+1}}{j+1}\binom
{n}{j}\left\vert B_{j+1}\right\vert \pi_{n-j}\left(  x\right)  \right)
\frac{z^{n}}{n!}\\
=x%
{\displaystyle\sum\limits_{n\geq0}}
\pi_{n}\left(  x\right)  \frac{z^{n}}{n!}-\sum_{j\geq1}\frac{2^{j+1}}{\left(
j+1\right)  !}\left\vert B_{j+1}\right\vert
{\displaystyle\sum\limits_{n\geq0}}
\pi_{n-j}\left(  x\right)  \frac{z^{n}}{\left(  n-j\right)  !}\\
=\frac{1}{z}\left(  x-\frac{1}{z}\sum_{j\geq2}\frac{2^{j}}{j!}\left\vert
B_{j}\right\vert z^{j}\right)
{\displaystyle\sum\limits_{n\geq1}}
\beta_{n-1}\left(  x\right)  \frac{z^{n}}{n!}.
\end{multline*}
	Since,%
	\[
	\cot\left(  z\right)  -\frac{1}{z}=-\sum_{j\geq2}\frac{2^{j}}{j!}\left\vert
	B_{j}\right\vert z^{j-1},
	\]
	and
	\begin{align*}%
	{\displaystyle\sum\limits_{n\geq1}}
	\beta_{n-1}\left(  x\right)  \frac{z^{n}}{n!}  & =%
	{\displaystyle\int}
	e^{xz}\left(  \cos z+x\sin z\right)  dz\\
	& =e^{xz}\sin z.
	\end{align*}
	We get%
	\[%
	{\displaystyle\sum\limits_{n\geq0}}
	\left(  x\pi_{n}\left(  x\right)  -\sum_{j=1}^{n}\frac{2^{j+1}}{j+1}\binom
	{n}{j}\left\vert B_{j+1}\right\vert \pi_{n-j}\left(  x\right)  \right)
	\frac{z^{n}}{n!}=\frac{e^{xz}}{z^{2}}\left(  \left(  xz-1\right)  \sin z+z\cos
	z\right)  .
	\]
	On the other hand, we have%
	\begin{align*}%
	{\displaystyle\sum\limits_{n\geq0}}
	\pi_{n+1}\left(  x\right)  \frac{z^{n}}{n!}  & =%
	{\displaystyle\sum\limits_{n\geq0}}
	\frac{\beta_{n+1}\left(  x\right)  }{n+2}\frac{z^{n}}{n!}\\
	& =%
	{\displaystyle\sum\limits_{n\geq0}}
	\left(  n+1\right)  \beta_{n+1}\left(  x\right)  \frac{z^{n}}{\left(
		n+2\right)  !}\\
	& =%
	{\displaystyle\sum\limits_{n\geq1}}
	\left(  n-1\right)  \beta_{n-1}\left(  x\right)  \frac{z^{n-2}}{n!}\\
	& =\frac{1}{z}%
	{\displaystyle\sum\limits_{n\geq0}}
	\beta_{n}\left(  x\right)  \frac{z^{n}}{n!}-\frac{1}{z^{2}}%
	{\displaystyle\sum\limits_{n\geq1}}
	\beta_{n-1}\left(  x\right)  \frac{z^{n}}{n!}\\
	& =\frac{1}{z}e^{xz}\left(  \cos z+x\sin z\right)  -\frac{1}{z^{2}}e^{xz}\sin
	z\\
	& =\frac{1}{z^{2}}e^{xz}\left(  z\cos z+\left(  zx-1\right)  \sin z\right)  .
	\end{align*}
	The theorem is verified.
\end{proof}

Now, using the fact that $B_{2n+1}=0$ for $n>1,$ we can write
\[%
\genfrac{[}{]}{0pt}{}{n}{2j}%
=0\text{ and }%
\genfrac{[}{]}{0pt}{}{n}{2j+1}%
=\frac{2^{2j+2}}{2j+2}\binom{n}{2j+1}\left\vert B_{2j+1}\right\vert .
\]
Then, the $n\times n$ Hessenberg matrix $H_{n}$ takes the form%
\[
H_{n}=%
\begin{pmatrix}
0 & \frac{1}{3} & 0 & \frac{2}{15} & 0 & \frac{16}{63} & \cdots & \frac{2^{n}%
}{n}\left\vert B_{n}\right\vert \\
1 & 0 & \frac{2}{3} & 0 & \frac{8}{15} & 0 & \cdots & 2^{n-1}\left\vert
B_{n-1}\right\vert \\
0 & 1 & 0 & 1 & 0 & \frac{32}{21} & \cdots & \left(  n-1\right)
2^{n-3}\left\vert B_{n-2}\right\vert \\
0 & 0 & 1 & 0 & \frac{4}{3} & 0 & \cdots & \left(  n-1\right)  \left(
n-2\right)  \frac{2^{n-4}}{3}\left\vert B_{n-3}\right\vert \\
0 & 0 & 0 & 1 & 0 & \frac{5}{3} & \cdots & \left(  n-1\right)  \left(
n-2\right)  \left(  n-3\right)  \frac{2^{n-7}}{3}\left\vert B_{n-4}\right\vert
\\
\vdots & \vdots & \vdots &  & \ddots & \ddots & \ddots & \vdots\\
0 & 0 & 0 & 0 & 0 & \cdots & 0 & \frac{1}{3}\left(  n-1\right)  \\
0 & 0 & 0 & 0 & 0 & \cdots & 1 & 0
\end{pmatrix}
,
\]
which the eigenvalues are $\lambda_{k}=\cot\left(  \frac{k\pi}{n+1}\right)  ,$
for $k=1,\ldots,n$.

It is convenient to define a companion sequence $\alpha_{n}\left(
x\right)  $ of $\beta_{n}\left(  x\right)  $ by
\begin{align}
\alpha_{n}\left(  x\right)   &  =\operatorname{Re}(\left(  x+i\right)  ^{n})\nonumber\\
&  =%
{\displaystyle\sum\limits_{k=0}^{\left\lfloor n/2\right\rfloor }}
\left(  -1\right)  ^{k}\dbinom{n}{2k}x^{n-2k} \label{ani}\\
&  =%
{\displaystyle\sum\limits_{k=0}^{n}}
\left(  -1\right)  ^{k}\binom{n}{k}\cos\left(  \frac{k\pi}{2}\right)
x^{n-k},\nonumber \\
& =x^{n}\text{ }_{2}F_{1}\left(  -\frac{n}{2},\frac{1}%
{2}-\frac{n}{2};\frac{1}{2};-\frac{1}{x^{2}}\right), \nonumber
\end{align}
where $\operatorname{Re}\left(  z\right)$, denotes the real part
of $z$. By direct computation from (\ref{ani}), we find
\[%
\begin{tabular}
[c]{l}%
$\alpha_{0}(x)=1$\\
$\alpha_{1}(x)=x$\\
$\alpha_{2}(x)=x^{2}-1$\\
$\alpha_{3}(x)=x^{3}-3x$\\
$\alpha_{4}(x)=x^{4}-6x^{2}+1$\\
$\alpha_{5}(x)=x^{5}-10x^{3}+5x$%
\end{tabular}
\]

Similarly, we obtain

\begin{theorem}\label{alpha}
	\begin{enumerate}
		\item[\ ]
		\item The ordinary generating function of $\alpha_{n}\left(  x\right)  $ is
		given by%
		\begin{equation}%
				{\displaystyle\sum\limits_{n\geq0}}
			\alpha_{n}\left(  x\right)  z^{n}=\frac{1-xz}{1-2xz+\left(
			1+x^{2}\right) z^{2}}.
		\end{equation}
		
		\item The exponential generating function of $\alpha_{n}\left(  x\right)  $ is given by%
		\begin{equation}%
		{\displaystyle\sum\limits_{n\geq0}}
		\alpha_{n}\left(  x\right) \frac{z^{n}}{n!}=\cos(z)e^{xz}.
		\end{equation}
		\item The $\alpha_{n}\left(  x\right)$ satisfy the following three-term recurrence relation%
		\[
		\alpha_{n+1}\left(  x\right)  =2x\alpha_{n}\left(  x\right)  -\left(
		1+x^{2}\right)  \alpha_{n-1}\left(  x\right)  ,
		\]
		with initial conditions $\alpha_{0}\left(  x\right)  =1$ and $\alpha
		_{1}\left(  x\right)  =x.$
		
		\item We have
		\begin{align}
		\alpha_{n}\left(  x\right)   &  =%
		\begin{pmatrix}
		1 & x
		\end{pmatrix}%
		\begin{pmatrix}
		0 & -\left(  1+x^{2}\right) \\
		1 & 2x
		\end{pmatrix}
		^{n}%
		\begin{pmatrix}
		1\\
		0
		\end{pmatrix}
		\\
		&  =%
		\begin{vmatrix}
		x & -\left(  1+x^{2}\right)  & 0 & \cdots & 0\\
		-1 & 2x & -\left(  1+x^{2}\right)  &  & \vdots\\
		0 & -1 & \ddots & \ddots & 0\\
		&  & \ddots & \ddots & -\left(  1+x^{2}\right) \\
		0 & \cdots & 0 & -1 & 2x
		\end{vmatrix}
		\\
		&  =\left(  \sqrt{1+x^{2}}\right)  ^{n}T_{n}\left(  \frac{x}{\sqrt{1+x^{2}}%
		}\right)\label{25}
		\end{align}
		where $T_{n}$ is the $n$th Chebyshev polynomial of the first kind
		defined by
		\[
		T_{n}\left(  x\right)  =\cos(n\theta)\ \text{ when }\ x=\cos\theta.
		\]
			
		\item The following result holds true%
		\begin{equation}
		\alpha_{n}\left(  -x\right)  =\left(  -1\right)
		^{n}\alpha_{n}\left( x\right)  .
		\end{equation}

		\item We have%
		\begin{equation}
		\frac{d}{dx}\alpha_{n}\left(  x\right)  =n\alpha_{n-1}\left(
		x\right)  .
		\end{equation}

		\item $\alpha_{n}\left(  x\right)  $ satisfies the linear second order ODE%
		\begin{equation}
		\left(  1+x^{2}\right)  \alpha_{n}^{\prime\prime}\left(  x\right)
		-2(n-1)x\alpha_{n}^{\prime}\left(  x\right)  +n\left(  n-1\right)
		\alpha _{n}\left(  x\right)  =0
		\end{equation}

		\item The roots of $\ \alpha_{n}\left(  x\right)  $ of degree $n\geq1$ has $n
		$ simple zeros in $\mathbb{R}$ at
		\begin{equation}
		x_{k}=\cot\left(  \frac{\left(  2k-1\right)  \pi}{2n}\right)
		,\text{ for each }k=1,\ldots,n.
		\end{equation}
		\item For $n\geq0,$ we have%
		\begin{equation}
		\alpha_{0}\left(  x\right)  =1;\text{ }\alpha_{n+1}\left(  x\right)  =x\alpha_{n}\left(  x\right)  -\sum_{j=1}^{n}\frac{2^{j+1}(2^{j+1}-1)}{j+1}\binom{n}{j}\left\vert	B_{j+1}\right\vert \alpha_{n-j}\left(  x\right)  .
		\end{equation}
		
	\end{enumerate}
\end{theorem}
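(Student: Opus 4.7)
The plan is to mirror, item by item, the arguments carried out for $\beta_n(x)$ in the preceding sections, using the base identity $\alpha_n(x)=\operatorname{Re}((x+i)^n)$ as the analogue of $\beta_n(x)=\operatorname{Im}((x+i)^{n+1})$. Since most of the nine parts are essentially copies of earlier proofs, the real task is to identify the few places where something genuinely new is needed.

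For parts (1) and (2), I would write $\alpha_n(x)=\tfrac{1}{2}((x+i)^n+(x-i)^n)$ and sum the two resulting geometric series over the common denominator $1-2xz+(1+x^2)z^2$; the numerator collapses to $1-xz$. The EGF is even simpler, coming directly from $\operatorname{Re}(e^{(x+i)z})=e^{xz}\cos z$. Parts (3), (5), and (6) are then routine: clearing denominators in the OGF and comparing coefficients yields the three-term recurrence (the quadratic denominator matches the one for $\beta_n$, so the recurrence itself is the same and only the initial data change); the parity relation follows by comparing the OGF at $(x,z)$ and $(-x,-z)$; and differentiating $\operatorname{Re}((x+i)^n)$ in $x$ gives $n\alpha_{n-1}(x)$ at once.

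For part (4), I would iterate the matrix form of the recurrence starting from $(\alpha_0,\alpha_1)=(1,x)$ and then read off the tridiagonal determinant expansion. The Chebyshev $T_n$ identity in (\ref{25}) is obtained most efficiently by the substitution $x=\cot\theta$: then $x+i=e^{i\theta}/\sin\theta$, so $\alpha_n(x)=\cos(n\theta)/\sin^n\theta=(1+x^2)^{n/2}T_n(x/\sqrt{1+x^2})$. This sidesteps the mild obstacle that the $(1,1)$-entry $x$ of the displayed tridiagonal matrix differs from the remaining diagonal entries $2x$, so Lemma \ref{lemme1} does not apply verbatim. Parts (7) and (8) then follow formally: differentiating the three-term recurrence and eliminating lower-index terms via $\alpha_n'=n\alpha_{n-1}$ produces the second-order ODE, while substituting the zeros $\cos((2k-1)\pi/(2n))$ of $T_n$ into $x_k/\sqrt{1+x_k^2}$ and solving gives $x_k=\cot((2k-1)\pi/(2n))$.

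The main obstacle is part (9), the extended monic recurrence; note that $\alpha_n(x)$ is already monic, so no rescaling by a factor $1/(n+1)$ is needed. Following the pattern of the $\beta_n$ argument, I would multiply the claimed identity by $z^n/n!$ and sum over $n$. The convolution on the right factors as
\[\Bigl(\sum_{m\geq 0}\alpha_m(x)\tfrac{z^m}{m!}\Bigr)\Bigl(\sum_{j\geq 1}\tfrac{2^{j+1}(2^{j+1}-1)|B_{j+1}|}{(j+1)!}z^j\Bigr).\]
The crux is recognizing that the second factor is exactly the Taylor series of $\tan z$, via the classical expansion $\tan z=\sum_{n\geq 1}\tfrac{2^{2n}(2^{2n}-1)|B_{2n}|}{(2n)!}z^{2n-1}$ together with the vanishing of $B_{2k+1}$ for $k\geq 1$. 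With this identification, the right-hand side EGF becomes $e^{xz}\cos z\,(x-\tan z)=e^{xz}(x\cos z-\sin z)$, which equals $\tfrac{d}{dz}(e^{xz}\cos z)=\sum_{n\geq 0}\alpha_{n+1}(x)\,z^n/n!$, completing the verification.
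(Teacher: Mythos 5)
Your proof is correct. The paper offers no separate proof of this theorem --- it is introduced with ``Similarly, we obtain'', leaving the reader to transpose the $\beta_n$ arguments --- and your item-by-item transposition is exactly what is intended: OGF and EGF from $\alpha_n(x)=\tfrac12\bigl((x+i)^n+(x-i)^n\bigr)$, recurrence and parity from the OGF, $\alpha_n'=n\alpha_{n-1}$ from the closed form, the ODE by differentiating the mixed recurrence, and the EGF verification of the extended recurrence, where you correctly identify the weight series $\sum_{j\ge1}\tfrac{2^{j+1}(2^{j+1}-1)|B_{j+1}|}{(j+1)!}z^{j}$ as $\tan z$ (the analogue of the paper's $\tfrac1z-\cot z$ for $\beta_n$), giving $e^{xz}\cos z\,(x-\tan z)=\tfrac{d}{dz}(e^{xz}\cos z)$. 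The one place you genuinely depart from the paper's template is the Chebyshev identity in item (4): you rightly observe that Lemma \ref{lemme1} does not apply verbatim because the $(1,1)$ entry of the tridiagonal matrix is $x$ rather than $2x$ (the paper would need the corresponding $T_n$ determinant analogue of (\ref{2nd}) plus symmetrization), and you instead substitute $x=\cot\theta$ so that $x+i=e^{i\theta}/\sin\theta$ and $\alpha_n(x)=\cos(n\theta)/\sin^n\theta$ immediately. That substitution is shorter and self-contained, and it also gives item (8) for free; the determinant route, by contrast, keeps the parallel with the $U_n$ treatment of $\beta_n$ but requires importing one more determinant identity.
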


\begin{theorem}
	For all $n\geq1,$ we have%
	\begin{align*}
	\alpha_{n}\left(  x\right)   &  =\beta_{n}\left(  x\right)  -x\beta
	_{n-1}\left(  x\right) \\
	\beta_{n}\left(  x\right)   &  =x\left(  1+x^{2}\right)
	\alpha_{n-1}\left( x\right)  -\left(  x^{2}-1\right)
	\alpha_{n}\left(  x\right)  .
	\end{align*}
	
\end{theorem}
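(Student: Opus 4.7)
The plan is to give a short proof by working directly with the complex representations that were already established in equations (\ref{imm}) and (\ref{ani}), namely $\alpha_{n}(x)=\operatorname{Re}((x+i)^{n})$ and $\beta_{n}(x)=\operatorname{Im}((x+i)^{n+1})$. These formulas reduce both identities to one-line complex-algebra manipulations, so the main work is just choosing the right factorization.

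For the first identity, I would factor $(x+i)^{n}$ out of $\beta_{n}(x)-x\beta_{n-1}(x)$ using $\mathbb{R}$-linearity of $\operatorname{Im}$:
$$\beta_{n}(x)-x\beta_{n-1}(x)=\operatorname{Im}\bigl((x+i)^{n}\bigl((x+i)-x\bigr)\bigr)=\operatorname{Im}\bigl(i(x+i)^{n}\bigr)=\operatorname{Re}\bigl((x+i)^{n}\bigr)=\alpha_{n}(x),$$
since multiplication by $i$ interchanges real and imaginary parts.

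For the second identity, I would set $z=x+i$ and $\bar z=x-i$, so that $1+x^{2}=z\bar z$, and write $\alpha_{k}(x)=\tfrac{1}{2}(z^{k}+\bar z^{k})$, $\beta_{n}(x)=\tfrac{1}{2i}(z^{n+1}-\bar z^{n+1})$. Substituting into the right-hand side gives
$$x(1+x^{2})\alpha_{n-1}(x)-(x^{2}-1)\alpha_{n}(x)=\tfrac{1}{2}\bigl(z^{n}(x\bar z-(x^{2}-1))+\bar z^{n}(xz-(x^{2}-1))\bigr).$$
The key simplification, which is the one small piece of algebra to check, is that $x\bar z-(x^{2}-1)=1-ix=-iz$ and $xz-(x^{2}-1)=1+ix=i\bar z$. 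Feeding these back in collapses the expression to $\tfrac{1}{2}(-iz^{n+1}+i\bar z^{n+1})=\tfrac{1}{2i}(z^{n+1}-\bar z^{n+1})=\beta_{n}(x)$.

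There is essentially no obstacle; the only step that is not automatic is spotting the two conjugate identities $xz-(x^{2}-1)=i\bar z$ and $x\bar z-(x^{2}-1)=-iz$, which come immediately from expanding. As an alternative, one could observe that both $\alpha_{n}$ and $\beta_{n}$ satisfy the common three-term recurrence $u_{n+1}=2xu_{n}-(1+x^{2})u_{n-1}$, so any $\mathbb{R}(x)$-linear combination does too; then one only needs to verify each identity for $n=1,2$, which is immediate from Table \ref{Tab1} and the corresponding list of $\alpha_{n}$, and induction finishes the argument.
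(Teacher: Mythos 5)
Your proof is correct and follows essentially the same route as the paper: the paper's proof simply records the representations $\alpha_{n}(x)=\tfrac{1}{2}\bigl((x+i)^{n}+(x-i)^{n}\bigr)$ and $\beta_{n}(x)=\tfrac{1}{2i}\bigl((x+i)^{n+1}-(x-i)^{n+1}\bigr)$ and asserts the result, while you carry out the same complex-algebra computation explicitly (and your identities $xz-(x^{2}-1)=i\bar z$, $x\bar z-(x^{2}-1)=-iz$ check out). No issues.
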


\begin{proof}
	Since
	\begin{equation}
	\alpha_{n}\left(  x\right)  =\frac{\left(  x+i\right)  ^{n}+\left(
		x-i\right)  ^{n}}{2} \label{alphaa}%
	\end{equation}
	and
	\begin{equation}
	\beta_{n}\left(  x\right)  =\frac{\left(  x+i\right)  ^{n+1}-\left(
		x-i\right)  ^{n+1}}{2i}, \label{betaa}%
	\end{equation}

	we get the desired result.
\end{proof}

In the same manner, we can prove the Tur\'{a}n's inequalities for
$\alpha _{n}\left(  x\right)  $ and $\beta_{n}\left(  x\right)  .$

\begin{theorem}
	Tur\'{a}n's inequalities for $\alpha_{n}\left(  x\right)  $ and
	$\beta
	_{n}\left(  x\right)  $ are%
	\begin{align*}
	\alpha_{n}^{2}\left(  x\right)  -\alpha_{n-1}\left(  x\right)
	\alpha _{n+1}\left(  x\right)   &  =\left(  x^{2}+1\right)
	^{n-1}>0,\text{ \ for
	}n\geq1\\
	\beta_{n}^{2}\left(  x\right)  -\beta_{n-1}\left(  x\right)  \beta
	_{n+1}\left(  x\right)   &  =\left(  x^{2}+1\right)  ^{n}>0,\text{ \
		for }n\geq0.
	\end{align*}
	
\end{theorem}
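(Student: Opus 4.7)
The plan is to prove both Turán-type identities by plugging the closed forms (\ref{alphaa}) and (\ref{betaa}) directly into the left-hand sides, since the hard combinatorial work is already packaged in the identities $\alpha_n(x)=\frac{u^n+v^n}{2}$ and $\beta_n(x)=\frac{u^{n+1}-v^{n+1}}{2i}$, where I set $u:=x+i$ and $v:=x-i$. The two key scalar identities I will exploit throughout are
\[
uv=(x+i)(x-i)=x^{2}+1,\qquad u^{2}+v^{2}=(x+i)^{2}+(x-i)^{2}=2(x^{2}-1).
\]

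First I would handle the $\alpha$ case. Squaring $\alpha_n=(u^n+v^n)/2$ and multiplying $\alpha_{n-1}\alpha_{n+1}$, the mixed pure-power terms $u^{2n}+v^{2n}$ appear in both expressions and cancel upon subtraction. What remains is
\[
\alpha_n^{2}(x)-\alpha_{n-1}(x)\alpha_{n+1}(x)=\frac{(uv)^{n}}{2}-\frac{(uv)^{n-1}(u^{2}+v^{2})}{4},
\]
and substituting $uv=x^{2}+1$ and $u^{2}+v^{2}=2(x^{2}-1)$ collapses this to $(x^{2}+1)^{n-1}\bigl[(x^{2}+1)-(x^{2}-1)\bigr]/2=(x^{2}+1)^{n-1}$, which is manifestly positive for real $x$.

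The $\beta$ case is almost identical, but with an extra factor of $-1$ from $(2i)^{2}=-4$. Expanding $\beta_n^{2}-\beta_{n-1}\beta_{n+1}$ and again cancelling the $u^{2n+2}+v^{2n+2}$ pair yields
\[
\beta_n^{2}(x)-\beta_{n-1}(x)\beta_{n+1}(x)=\frac{(uv)^{n+1}}{2}-\frac{(uv)^{n}(u^{2}+v^{2})}{4}=(x^{2}+1)^{n},
\]
by the same simplification, giving a strictly positive quantity for real $x$.

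The whole argument is a two-line algebraic manipulation, so I do not anticipate any real obstacle; the only thing to be careful about is the sign handling in the $\beta$ computation (the $1/(2i)$ prefactor flips a sign when squared), and making sure to use $\beta_n$ with exponent $n+1$ in $u,v$ rather than $n$, so that the exponent bookkeeping gives $(x^{2}+1)^{n}$ rather than $(x^{2}+1)^{n-1}$. One could alternatively derive these identities from the three-term recurrences in (\ref{recc}) and Theorem \ref{alpha}(3) by induction, observing that the quantity $\Delta_n:=\alpha_n^{2}-\alpha_{n-1}\alpha_{n+1}$ satisfies $\Delta_{n+1}=(1+x^{2})\Delta_n$ from the recurrence, and checking the base case directly; but the Binet-type route above is cleaner and makes the positivity transparent.
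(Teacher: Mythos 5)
Your proposal is correct and follows essentially the same route the paper intends: the paper proves this theorem ``in the same manner'' as the preceding one, i.e.\ by substituting the Binet-type representations $\alpha_n(x)=\tfrac{1}{2}\bigl((x+i)^n+(x-i)^n\bigr)$ and $\beta_n(x)=\tfrac{1}{2i}\bigl((x+i)^{n+1}-(x-i)^{n+1}\bigr)$ and simplifying, exactly as you do. Your algebra checks out (including the sign from $(2i)^2=-4$ and the $n=0$ case for $\beta$, where $\beta_{-1}=0$), so you have in fact supplied the details the paper leaves implicit.
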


\section{Connection with other sequences}

It is well known that $\tan\left(  n\arctan\left(  x\right)  \right)
$ is a rational function and is equal to the following identity \cite{Beeler}
\[
\tan\left(  n\arctan\left(  x\right)  \right)
=\frac{1}{i}\frac{\left( 1+ix\right)  ^{n}-\left(  1-ix\right)
	^{n}}{\left(  1+ix\right)  ^{n}+\left(
	1-ix\right)  ^{n}}.
\]

It follows from (\ref{alphaa}) and (\ref{betaa}) that for all
$n\geq1,$ we
have%
\begin{align*}
\tan\left(  n\arctan\left(  x\right)  \right)   &  =\left\{
\begin{array}
[c]{c}%
-\dfrac{\beta_{n-1}\left(  x\right)  }{\alpha_{n}\left(  x\right)
},\text{
	\ \ \ \ }n\text{ even}\\
\dfrac{\alpha_{n}\left(  x\right)  }{\beta_{n-1}\left(  x\right)
},\text{
	\ \ \ \ \ \ \ \ }n\text{ odd}%
\end{array}
\right. \\
&  =\left\{
\begin{array}
[c]{c}%
x-\left(  1+x^{2}\right)  \dfrac{\alpha_{n-1}\left(  x\right)
}{\alpha
	_{n}\left(  x\right)  },\text{ \ \ \ \ }n\text{ even}\\
\dfrac{\beta_{n}\left(  x\right)  }{\beta_{n-1}\left(  x\right)
}-x,\text{
	\ \ \ \ \ \ \ \ }n\text{ odd}%
\end{array}
\right.  .
\end{align*}

\subsection{Fibonacci polynomial}

Let $h\left(  x\right)  $ be a polynomial with real coefficients.
The link between Fibonacci polynomials and Chebyshev polynomials of
the second kind is
given by%
\[
F_{n,h}\left(  x\right)  =i^{n-1}U_{n-1}\left(  \frac{h\left(  x\right)  }%
{2i}\right)  ,
\]
now using (\ref{explicit beta}) we get%
\begin{align}
F_{n,h}\left(  x\right)   &  =\left(  \frac{i}{2}\right)
^{n-1}\left( \sqrt{h^{2}\left(  x\right)  +4}\right)
^{n-1}\beta_{n-1}\left( \frac{-ih\left(  x\right)
}{\sqrt{h^{2}\left(  x\right)  +4}}\right)
\nonumber\\
&  =\frac{1}{2^{n-1}}%
{\displaystyle\sum\limits_{k=0}^{\left\lfloor n/2\right\rfloor }}
\dbinom{n+1}{2k+1}h^{n-2k}\left(  x\right)  \left(  h^{2}\left(
x\right)
+4\right)  ^{k} \label{expfib}%
\end{align}

\subsection{Lucas polynomial}

In the same manner, Lucas polynomials and Chebyshev polynomials of
the first
kind are related by%
\[
L_{n,h}\left(  x\right)  =2i^{n}T_{n}\left(  \frac{h\left(  x\right)  }%
{2i}\right) ,
\]
Using (\ref{25}), we get
\begin{align}
L_{n,h}\left(  x\right)   &  =\frac{i^{n}}{2^{n-1}}\left(
\sqrt{h^{2}\left(
	x\right)  +4}\right)  ^{n}\alpha_{n}\left(  \frac{-ih\left(  x\right)  }%
{\sqrt{h^{2}\left(  x\right)  +4}}\right) \nonumber\\
&  =\frac{1}{2^{n-1}}%
{\displaystyle\sum\limits_{k=0}^{\left\lfloor n/2\right\rfloor }}
\dbinom{n}{2k}h^{n-2k}\left(  x\right)  \left(  h^{2}\left(
x\right)
+4\right)  ^{k} \label{expLuc}%
\end{align}

Note that the above formulas (\ref{expfib}) and (\ref{expLuc}) are
given in \cite{NH09} which generalize the Catalan formulas for
Fibonacci and Lucas numbers (see Koshy \cite{Koshy01} page 162).

\subsection{Matching polynomial}

The matching polynomial \cite{Far79} is a well-known polynomial in
graph theory and is defined by
\[
M_{G}(x)=\displaystyle\sum_{k=0}^{\left\lfloor n/2\right\rfloor}
(-1)^{k} m(G,k)x^{n-2k}.
\]

We know from Hosoya in \cite{Hosoya94}, a transformation of a
matching polynomial into typical orthogonal polynomials by

\begin{align*}
M_{P_{n}}(x)  &  =U_{n}(x/2),\\
M_{C_{n}}(x)  &  =2T_{n}(x/2),
\end{align*}

where $P_{n}$ and $C_{n}$ are the path and the cycle graph
respectively.

Now, by using (\ref{explicit beta}) and (\ref{25}) with an appropriate change of variables, we get

\begin{align}
M_{P_{n}}(x) &= \frac{1}{2^{n}}\sum_{k=0}^{\left\lfloor n/2\right\rfloor } (-1)^{k}
\binom{n+1}{2k+1}x^{n-2k}\left(
4-x^{2}\right)^{k} \label{expmp},\\
M_{C_{n}}(x) &= \frac{1}{2^{n-1}}\sum_{k=0}^{\left\lfloor n/2\right\rfloor
}(-1)^{k}\binom{n}{2K} x^{n-2k}\left(4-x^{2}\right)^{k}.
\label{expmc}
\end{align}
\section{Conclusion}
In our present investigation, we have studied polynomials which are
induced from the higher-order derivatives of $\arctan(x)$. We have
derived some explicit formula for higher order derivatives of the
inverse tangent function, generating functions, recurrence relations
and some particular properties for these polynomials. As a
consequence, we have established connections to Chebyshev,
Fibonacci, Lucas and Matching polynomials. We did not examine the
orthogonality of $\alpha_{n}(x)$ and $\beta_{n}(x)$ polynomials. We
believe that these polynomials are a nice example for Sobolev
orthogonal polynomials.

\section*{Acknowledgement}
The second author wish to thank Professor Gradimir V.
Milovanovi\'{c} for the fruitful discussions during his visit to the
University of Sciences and Technology (USTHB) at Algiers, Algeria.


\end{document}